\documentclass[a4paper,english]{article}
\pdfoutput=1

\usepackage[T1]{fontenc}
\usepackage[utf8]{inputenc}
\usepackage[english]{babel}
\usepackage{amsthm}
\usepackage{amsmath}
\usepackage{amssymb}
\usepackage{graphicx}
\usepackage{esint}
\usepackage[unicode=true,pdfusetitle,
 bookmarks=true,bookmarksnumbered=false,bookmarksopen=false,
 breaklinks=false,pdfborder={0 0 0},backref=false,colorlinks=false]
 {hyperref}
\hypersetup{pdftitle={Polynomial and rational inequalities on analytic Jordan arcs and domains},
 pdfauthor={Sergei Kalmykov and Béla Nagy}}

\makeatletter


\usepackage{enumitem}		
\theoremstyle{plain}
\newtheorem{thm}{\protect\theoremname}
  \theoremstyle{plain}
  \newtheorem*{thm*}{\protect\theoremname}
  \theoremstyle{plain}
  \newtheorem{prop}[thm]{\protect\propositionname}
  \theoremstyle{plain}
  \newtheorem{lem}[thm]{\protect\lemmaname}

\date{}

\makeatother

  \providecommand{\lemmaname}{Lemma}
  \providecommand{\propositionname}{Proposition}
  \providecommand{\theoremname}{Theorem}
\providecommand{\theoremname}{Theorem}

\begin{document}
\global\long\def\bC{\mathbf{C}}
\global\long\def\bCi{\mathbf{C}_{\infty}}
\global\long\def\bZ{\mathbf{Z}}
\global\long\def\bT{\mathbb{T}}
\global\long\def\bR{\mathbf{R}}
\global\long\def\res{\mathrm{Res}}
\global\long\def\bD{\mathbb{D}}
\global\long\def\sign{\mathrm{sgn}}
\global\long\def\dist{\mathrm{dist}}
\global\long\def\ord{\mathrm{ord}}
\global\long\def\inter{\mathrm{Int}}

\title{Polynomial and rational inequalities on analytic Jordan arcs and
domains}

\author{Sergei Kalmykov and Béla Nagy}

\maketitle
\begin{center}
{\large{}Dedicated to Professor Vilmos Totik on his sixtieth birthday}
\par\end{center}{\large \par}

\begin{abstract}
In this paper we prove an asymptotically sharp Bernstein-type inequality
for polynomials on analytic Jordan arcs. Also a general statement
on mapping of a domain bounded by finitely many Jordan curves onto
a complement to a system of the same number of arcs with rational
function is presented here. This fact, as well as, Borwein-Erdélyi
inequality for derivative of rational functions on the unit circle,
Gonchar-Grigorjan estimate of the norm of holomorphic part of meromorphic
functions and Totik's construction of fast decreasing polynomials
play key roles in the proof of the main result. 
\footnote{
This is author accepted manuscript, including a few typo corrections.
The published version of the paper is available at 
\href{http://dx.doi.org/10.1016/j.jmaa.2015.05.022}{DOI: 10.1016/j.jmaa.2015.05.022}.
}

Classification (MSC 2010): 41A17, 30C20, 30E10
\end{abstract}

\section*{Introduction}

Let $\bT:=\left\{ z\in\bC:|z|=1\right\} $ denote the unit circle,
$\bD:=\left\{ z\in\bC:|z|<1\right\} $ denote the unit disk and $\bCi:=\bC\cup\left\{ \infty\right\} $
denote the extended complex plane. We also use $\bD^{*}:=\left\{ z\in\bC:\ \left|z\right|>1\right\} \cup\left\{ \infty\right\} $
for the exterior of the unit disk and $\left\Vert .\right\Vert _{K}$
for the sup norm over the set $K$.

First, we recall a Bernstein-type inequality proved by Borwein and
Erdélyi in \cite{MR1433285} (and in a special case, by Li, Mohapatra
and Rodriguez in \cite{MR1332889}). We rephrase their inequality
using potential theory (namely, normal derivatives of Green's functions)
and for the necessary concepts, we refer to \cite{MR1485778} and
\cite{MR1334766}. Then we present one of our main tools, the ``open-up''
step in Proposition \ref{prop:open-up}, similar step was also discussed
by Widom, see \cite{MR0239059}, p. 205--206 and Lemma 11.1. This
way we switch from polynomials and Jordan arcs to rational functions
and Jordan curves. Then we use two conformal mappings, $\Phi_{1}$
and $\Phi_{2}$ to map the interior of the Jordan domain onto the
unit disk and to map the exterior of the domain onto the exterior
of the unit disk respectively. We transform our rational function
with $\Phi_{1}$ and ``construct'' a similar rational function (approximate
with another, suitable rational function) so that the Borwein-Erdélyi
inequality can be applied.

Our main theorem is the following.
\begin{thm}
\label{thm:main} Let $K$ be an analytic Jordan arc, $z_{0}\in K$
not an endpoint. Denote the two normals to $K$ at $z_{0}$ by $n_{1}\left(z_{0}\right)$
and $n_{2}\left(z_{0}\right)$. Then for any polynomial $P_{n}$ of
degree $n$ we have 
\begin{multline*}
\left|P_{n}'\left(z_{0}\right)\right|\le\left(1+o\left(1\right)\right)n\left\Vert P_{n}\right\Vert _{K}\\
\cdot\max\left(\frac{\partial}{\partial n_{1}\left(z_{0}\right)}g_{\bC_{\infty}\setminus K}\left(z_{0},\infty\right),\ \frac{\partial}{\partial n_{2}\left(z_{0}\right)}g_{\bC_{\infty}\setminus K}\left(z_{0},\infty\right)\right)
\end{multline*}
where $o\left(1\right)$ depends on $z_{0}$ and $K$ only and tends
to $0$ as $n\rightarrow\infty$.
\end{thm}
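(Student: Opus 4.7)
The strategy follows the roadmap sketched in the introduction: reduce from the arc $K$ to a Jordan curve $\Gamma$ via the open-up step, then to the unit circle $\bT$ by two conformal maps, and finally invoke the Borwein-Erdélyi inequality for rational functions.

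First I would apply Proposition \ref{prop:open-up} to produce a domain $G$ bounded by an analytic Jordan curve $\Gamma$ together with a rational function $\Psi: G \to \bCi \setminus K$. The composition $R := P_n \circ \Psi$ is then a rational function of degree $n\cdot\deg \Psi$ with $\|R\|_\Gamma = \|P_n\|_K$; the point $z_0$ pulls back to two preimages $\zeta_1, \zeta_2 \in \Gamma$ (one per side of $K$ near $z_0$), and the two normals $n_1(z_0), n_2(z_0)$ correspond under $d\Psi$ to the two normals to $\Gamma$ at $\zeta_1, \zeta_2$. Conformal invariance identifies each $\partial g_{\bCi \setminus K}(z_0,\infty)/\partial n_i$ with an appropriate normal derivative of a Green's function at $\zeta_i$.

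Next, let $\Phi_1: \inter(\Gamma) \to \bD$ and $\Phi_2: \bCi \setminus \overline{\inter(\Gamma)} \to \bD^*$ be conformal maps. Since $R$ generally has poles on both sides of $\Gamma$, I would split $R = R_{\mathrm{in}} + R_{\mathrm{out}}$ into its interior- and exterior-polar parts with respect to $\Gamma$, using the Gonchar-Grigorjan theorem to control $\|R_{\mathrm{in}}\|_\Gamma, \|R_{\mathrm{out}}\|_\Gamma \le C (\log n) \|R\|_\Gamma$, and then transport $R_{\mathrm{in}}$ through $\Phi_1^{-1}$ and $R_{\mathrm{out}}$ through $\Phi_2^{-1}$ to the respective sides of $\bT$. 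To assemble a genuine rational function whose derivative on $\bT$ only reflects the local behaviour near a chosen preimage, I would multiply each side by a Totik fast decreasing polynomial centred at $\Phi_i(\zeta_i) \in \bT$: equal to $1 + o(1)$ at $\Phi_i(\zeta_i)$, of degree $o(n)$, and small enough elsewhere on $\bT$ to absorb the $\log n$ factor and the boundary mismatch between the two transported pieces.

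Applying the Borwein-Erdélyi bound at $\Phi_i(\zeta_i)$ to the resulting rational function and unwinding the chain rule through $\Phi_i$ and $\Psi$, together with the conformal invariance of Green's functions, yields $|P_n'(z_0)| \le (1 + o(1)) n \|P_n\|_K \cdot \partial g_{\bCi \setminus K}(z_0,\infty)/\partial n_i$ for each $i \in \{1,2\}$, from which the $\max$ form in the statement follows immediately. The hardest part will be the final quantitative balance: the $\log n$ loss from Gonchar-Grigorjan, the boundary mismatch after transport, and the degree cost of Totik's localization must all fit simultaneously into the single $o(1)$ factor; this forces a delicate calibration of the concentration scale of the fast decreasing polynomial against the logarithmic loss and the degree growth, without which the sharp asymptotic constant $1$ in front of $n$ could not be recovered.
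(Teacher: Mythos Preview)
Your outline matches the paper's high-level strategy, but there is a genuine gap at the step ``transport $R_{\mathrm{in}}$ through $\Phi_1^{-1}$ and $R_{\mathrm{out}}$ through $\Phi_2^{-1}$ \ldots\ assemble a genuine rational function.'' The two conformal maps $\Phi_1,\Phi_2$ agree on $\bT$ only at the single normalized point (here $v=1$); elsewhere they parametrize $\Gamma$ differently, so the transported pieces $R_{\mathrm{in}}\circ\Phi_1^{-1}$ and $R_{\mathrm{out}}\circ\Phi_2^{-1}$ do not sum to a reparametrization of $R$ on $\bT$. More seriously, neither transported piece is a rational function: $R_{\mathrm{in}}\circ\Phi_1^{-1}$ is merely meromorphic on $\bD$, and after you strip off its principal part the holomorphic remainder has no reason to have small derivative at $1$ (Gonchar--Grigorjan controls only its sup norm). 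A fast decreasing polynomial on $\bT$ does not fix this, because multiplying by it still does not produce a rational function, and the mismatch between the two parametrizations away from $1$ is of size $O(\log n)\|R\|_\Gamma$, not $o(1)\|R\|_\Gamma$.

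The paper resolves this by transporting \emph{everything} through the single map $\Phi_1$. The interior principal part is then genuinely rational with pole at $a_1\in\bD$, but the holomorphic remainder and the transported exterior piece $f_2\circ\Phi_1$ are not rational at all. The key step you are missing is that the paper \emph{approximates} these two holomorphic pieces by rational functions $p_{1,N}\circ\psi$, $p_{2,N}\circ\psi$ having a single pole at $a_2=\Phi_2^{-1}(\infty)\in\bD^*$, via polynomial interpolation with a double node at the image of $1$ (so that the value \emph{and} the derivative are matched exactly there). The sup-norm error of this interpolation is driven to $o(1)\|f\|_{\partial G_2}$ by a delicate choice of contour (the $\Gamma_2=\Gamma_{2,1}\cup\Gamma_{2,2}\cup\Gamma_{2,3}\cup\Gamma_{2,4}$ construction) balancing the Bernstein--Walsh growth of $f_2$ against the decay of $q_N$, with the fast decreasing polynomial $Q$ (built in the $u$-plane, not on $\bT$) providing the localization that makes the balance possible. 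Only after this do you have a single rational function $h$ on which Borwein--Erd\'elyi can be applied once at $v=1$, yielding the $\max$ of the two normal derivatives directly; there is no need to work at both preimages $\zeta_1,\zeta_2$ separately.
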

Remark. This theorem was formulated as a conjecture in \cite{MR3019778}
on page 225.

Theorem \ref{thm:main} is asymptotically sharp as the following theorem
shows.
\begin{thm}
\label{thm:sharpness} Let $K$ be a finite union of disjoint, $C^{2}$
smooth Jordan arcs and $z_{0}\in K$ is a fixed point which is not
an endpoint. We denote the two normals to $K$ at $z_{0}$ by $n_{1}\left(z_{0}\right)$
and $n_{2}\left(z_{0}\right)$. Then there exists a sequence of polynomials
$P_{n}$ with $\deg P_{n}=n\rightarrow\infty$ such that
\begin{multline*}
\left|P_{n}'\left(z_{0}\right)\right|\ge n\left(1-o\left(1\right)\right)\left\Vert P_{n}\right\Vert _{K}\\
\cdot\max\left(\frac{\partial}{\partial n_{1}\left(z_{0}\right)}g_{\bC_{\infty}\setminus K}\left(z_{0},\infty\right),\frac{\partial}{\partial n_{2}\left(z_{0}\right)}g_{\bC_{\infty}\setminus K}\left(z_{0},\infty\right)\right).
\end{multline*}

\end{thm}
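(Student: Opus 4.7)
Assume without loss of generality that $\nu:=\partial g/\partial n_{1}(z_{0})$ is the larger of the two normal derivatives of $g:=g_{\bC_{\infty}\setminus K}(\cdot,\infty)$ at $z_{0}$. The aim is to construct, along a subsequence $n\to\infty$, polynomials $P_{n}$ of degree $n$ with $\|P_{n}\|_{K}\le 1$ and $|P_{n}'(z_{0})|\ge(1-o(1))n\nu$.

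The strategy models the classical Chebyshev extremal construction for $[-1,1]$, transported through the open-up. By Proposition~\ref{prop:open-up} there is a rational map $T$ and a system of Jordan curves $\Gamma$ bounding a domain $\Omega_{\Gamma}$ such that $T$ maps $\Omega_{\Gamma}$ conformally onto $\bC_{\infty}\setminus K$ and folds $\Gamma$ onto $K$ (generically two-to-one), with covering involution $\sigma$. Polynomials in $z$ of degree $n$ correspond to $\sigma$-invariant rational functions on $\bC_{\infty}$ whose poles lie in $T^{-1}(\infty)$ with total multiplicity $2n$. Pick the preimage $w_{0}\in\Gamma$ of $z_{0}$ corresponding to approach from the $n_{1}$-side. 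On the curve side, the natural Bernstein extremal at $w_{0}$ is $\phi(w)^{n}$, where $\phi$ maps the outer component of $\bC_{\infty}\setminus\Gamma$ to $\bD^{*}$ (replaced by a Widom-type extremal polynomial in the multiply connected case): it has unit modulus on $\Gamma$ and realizes $|(\phi^{n})'(w_{0})|=n|\phi'(w_{0})|$. Symmetrizing, $r_{n}(w):=\tfrac{1}{2}\bigl(\phi(w)^{n}+\phi(\sigma w)^{n}\bigr)$ is $\sigma$-invariant and descends through $T$ to a polynomial $P_{n}(z)$ of degree $n$. The chain rule $P_{n}'(z_{0})T'(w_{0})=r_{n}'(w_{0})$ together with the identity $|T'(w_{0})|\,\nu=|\phi'(w_{0})|$ (obtained by pulling back $g$) then converts sharpness at $w_{0}$ into sharpness at $z_{0}$, provided $|r_{n}'(w_{0})|$ is close to $n|\phi'(w_{0})|$.

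Two refinements complete the argument. First, the symmetrization introduces an extra factor of the form $|\sin(\alpha_{n})|$ with $\alpha_{n}=\tfrac{n}{2}\arg(\phi(w_{0})/\phi(\sigma w_{0}))$; passing to the subsequence of $n$ for which $\alpha_{n}$ is close to $\pi/2 \pmod{\pi}$ (a dense set by Weyl equidistribution in the generic case; an easy perturbation handles the rational case) makes this factor $1-o(1)$. Second, if $K$ consists of several $C^{2}$ arcs, one approximates $K$ first by a system of analytic arcs $K_{\eta}$ along which the Green's function and its normal derivatives converge, and then localizes the extremal by multiplying $\phi^{n}$ by a Totik fast-decreasing polynomial of subpolynomial degree centered at $w_{0}$. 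This concentrates the mass of $r_{n}$ to a small neighborhood of $w_{0}$, so that the symmetrization does not wash out the sharp derivative there.

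\emph{Main obstacle:} the localization--symmetrization balance. The fast-decreasing polynomial must be strong enough that, once averaged with its $\sigma$-shift, the symmetrized $r_{n}$ still behaves near $w_{0}$ essentially as $\phi^{n}$ times a localizer (so the full derivative at $w_{0}$ is preserved), yet its total degree must be $o(n)$ so as not to consume the degree budget. Totik's construction, already invoked for the upper bound in Theorem~\ref{thm:main}, supplies such polynomials; the precise calibration of their decay rate against the sharpness loss is the technical heart of the argument.
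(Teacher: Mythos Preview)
Your plan is a genuinely different route from the paper's, and it is far more elaborate than what is needed. The paper's proof occupies half a page and does not use the open-up, symmetrization, or fast-decreasing polynomials at all. Instead, assuming $\partial g/\partial n_{2}(z_{0})$ is the larger normal derivative, it \emph{fattens} $K$ on the $n_{1}$-side to a compact set $K^{*}=K^{*}(\varepsilon)$ whose boundary is a finite union of $C^{2}$ Jordan \emph{curves}, with $K\subset K^{*}$, $z_{0}\in\partial K^{*}$, outward normal $n(K^{*},z_{0})=n_{2}(z_{0})$, and $\partial g_{\bC_{\infty}\setminus K^{*}}/\partial n(z_{0})\ge(1-\varepsilon)\,\partial g_{\bC_{\infty}\setminus K}/\partial n_{2}(z_{0})$. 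The sharpness theorem of Nagy--Totik for Jordan curves (Theorem~1.4 in \cite{MR2177185}) then furnishes polynomials $P_{\varepsilon,n}$ with $|P_{\varepsilon,n}'(z_{0})|\ge(1-o_{\varepsilon}(1))\,n\,\|P_{\varepsilon,n}\|_{K^{*}}\,\partial g_{\bC_{\infty}\setminus K^{*}}/\partial n(z_{0})$, and since $K\subset K^{*}$ one has $\|P_{\varepsilon,n}\|_{K}\le\|P_{\varepsilon,n}\|_{K^{*}}$. Letting $\varepsilon\to 0$ along a diagonal gives the result. In short: the curve case is already known, and arcs embed into curves for free in the direction needed for a lower bound.

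Your construction could in principle be made to work, but several of the steps you flag as refinements are in fact essential and nontrivial. First, $\phi^{n}$ is not rational, so $r_{n}=\tfrac{1}{2}(\phi^{n}+\phi^{n}\circ\sigma)$ does not descend through $T$; you must replace $\phi^{n}$ by a genuine polynomial (Faber-type or Widom extremal) and control the discrepancy on $\Gamma$. Second, without the localizer the symmetrization costs you a factor: even in the model case $K=[-1,1]$, $T(w)=\tfrac{1}{2}(w+w^{-1})$, your $r_{n}$ is the Chebyshev polynomial $T_{n}$, and $|T_{n}'(z_{0})|/(n\|T_{n}\|)=|\sin(n\theta_{0})|/\sin\theta_{0}$ with $z_{0}=\cos\theta_{0}$; for $\theta_{0}/\pi$ rational (e.g.\ $z_{0}=1/2$) this ratio is bounded strictly below $1/\sqrt{1-z_{0}^{2}}$ along \emph{every} subsequence, so the ``easy perturbation'' you invoke is doing real work. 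The fast-decreasing localizer kills the $\sigma$-shifted term near $w_{0}$ and rescues both the factor of $2$ and the phase issue, so it is not optional. Third, the preliminary analytic approximation of the $C^{2}$ arcs is an extra layer the paper avoids entirely. What your approach buys is an explicit, constructive extremal sequence tied to the conformal geometry of the open-up; what the paper's approach buys is a five-line reduction to a theorem already on the shelf.
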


\section{A rational inequality on the unit circle}

The following theorem was proved in \cite{MR1433285} (see also \cite{MR1367960},
p. 324, Theorem 7.1.7), with  slightly different notations.

If $f$ is a rational function, then $\deg\left(f\right)$ denotes
the maximum of the degrees of the numerator and denominator of $f$
(where we assume that the numerator and the denominator have no common
factors).
\begin{thm*}
[Borwein-Erdélyi] Let $a_{1},\dots,a_{m}\in\bC\setminus\left\{ |u|=1\right\} $
and let 
\[
B_{m}^{+}\left(u\right):=\sum_{j:\;|a_{j}|>1}\frac{|a_{j}|^{2}-1}{|a_{j}-u|^{2}},\qquad B_{m}^{-}\left(u\right):=\sum_{j:\;|a_{j}|<1}\frac{1-|a_{j}|^{2}}{|a_{j}-u|^{2}},
\]
and $B_{m}\left(u\right):=\max\left(B_{m}^{+}\left(u\right),\, B_{m}^{-}\left(u\right)\right)$.
If $R$ is a polynomial with $\deg(R)\le m$ and
$f\left(u\right)=R\left(u\right)/\prod_{j=1}^{m}\left(u-a_{j}\right)$
is a rational function, then
\[
|f'\left(u\right)|\le B_{m}\left(u\right)||f||_{\bT},\qquad u\in\bT.
\]

\end{thm*}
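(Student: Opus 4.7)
My approach is to identify $B_m^{\pm}(u)$ as boundary derivatives of two specific finite Blaschke products and then reduce the inequality to a comparison of $f$ with the dominant extremizer at $u_0$.

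First, the key identity. For a finite Blaschke product $B(u) = \prod_j (u-\alpha_j)/(1-\bar\alpha_j u)$ with zeros $\alpha_j \in \bD$, a factor-by-factor computation using $|1-\bar\alpha u| = |u-\alpha|$ on $\bT$ shows that $uB'(u)/B(u)$ is real and positive there, and
\[
|B'(u)| = \sum_j \frac{1-|\alpha_j|^2}{|u-\alpha_j|^2}, \qquad u \in \bT.
\]
Taking $\alpha_j := a_j$ for the indices with $|a_j|<1$ reproduces $B_m^-(u)$; taking $\alpha_j := 1/\bar a_j$ for the indices with $|a_j|>1$, and using $|1/\bar a_j - u| = |a_j - u|/|a_j|$ on $\bT$, reproduces $B_m^+(u)$. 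So $B_m^\pm$ are the boundary-derivative moduli of two degree-$\le m$ Blaschke products, the natural candidate extremizers for the Bernstein-type problem.

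Second, a Schwarz reflection reduces the problem to the case of one-sided poles. The multiplier $M(u) := \prod_{|a_j|<1}(u-a_j)/(1-\bar a_j u)$ is unimodular on $\bT$, so $\tilde f := f\cdot M$ satisfies $\|\tilde f\|_\bT = \|f\|_\bT$ and its poles all lie in $\bD^*$: the $a_j$ with $|a_j|>1$ are unchanged, while those with $|a_j|<1$ are sent to the reflected points $1/\bar a_j$. A dual construction yields $\hat f$ with all poles inside $\bD$. For $\tilde f$, the one-sided rational Bernstein inequality (essentially due to Rusak) gives $|\tilde f'(u_0)|\le |B_+'(u_0)|\|\tilde f\|_\bT$, where $B_+$ is the Blaschke product whose zeros are the reflections of the poles of $\tilde f$ into $\bD$; one proves it by an extremal argument in the finite-dimensional space of rational functions with prescribed pole set, in which the extremizer turns out to be $B_+$ up to rotation. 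Evaluating $|B_+'(u_0)|$ via the identity above yields exactly $B_m^+(u_0)+B_m^-(u_0)$. A mirror estimate applies to $\hat f$.

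The main obstacle, and what will require the most care, is recovering the sharp $\max$ on the right-hand side rather than a sum. A direct product rule on $f = \tilde f\cdot M^{-1}$ costs an extra $|M'(u_0)/M(u_0)|\cdot|f(u_0)| = B_m^-(u_0)\|f\|_\bT$, yielding only the weaker $B_m^++2B_m^-$. To get the max, I would instead carry out the extremal argument directly on the original space of rational functions with free pole distribution, and argue that the only possible extremizers at $u_0$ are the two Blaschke products identified above; $|f'(u_0)|/\|f\|_\bT$ is then bounded by the larger of their boundary derivatives. Establishing this dichotomy -- that ``mixing'' inside and outside poles cannot produce a better extremizer -- is the delicate step and presumably the content of the original Borwein--Erdélyi argument; it should proceed via a duality or reproducing-kernel computation in the finite-dimensional rational function space, combined with a rotational symmetry to normalize the phase of $f'(u_0)$.
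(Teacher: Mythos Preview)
The paper does not give its own proof of this statement: the Borwein--Erd\'elyi theorem is quoted as a known result from \cite{MR1433285} (see also \cite{MR1367960}) and is used as a black box. So there is no in-paper argument to compare against; what follows is an assessment of your proposal on its own merits.

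Your setup is accurate. The identification of $B_m^{\pm}(u)$ with $|B'(u)|$ for the two natural Blaschke products is correct, and your reflection trick $\tilde f = f\cdot M$ does move all poles into $\bD^*$ while preserving the sup norm on $\bT$. The one-sided bound then indeed yields $|\tilde f'(u_0)|\le (B_m^+(u_0)+B_m^-(u_0))\|f\|_{\bT}$, and your observation that undoing the multiplication via the product rule only recovers $B_m^+ + 2B_m^-$ is also correct.

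However, the proposal has a genuine gap at exactly the point that matters. The entire strength of the Borwein--Erd\'elyi theorem lies in the $\max(B_m^+,B_m^-)$ rather than a sum, and your final paragraph does not prove this; it only asserts that an extremal argument ``should proceed via a duality or reproducing-kernel computation'' and that one must show ``mixing inside and outside poles cannot produce a better extremizer.'' That is a restatement of what needs to be proved, not a proof. In the actual Borwein--Erd\'elyi argument this step is carried out by an explicit algebraic identity (an interpolation/partial-fractions formula for $f'$ in terms of the values of $f$ at the zeros of a suitable Blaschke product), which directly yields the $\max$; without such a concrete mechanism, a bare extremal-function argument in the full two-sided pole space does not by itself rule out an extremizer whose derivative exceeds both $B_m^+$ and $B_m^-$. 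As written, your proposal stops precisely where the real work begins.
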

If all the poles of $f$ are inside or outside
of $\bD$, then this result was improved in \cite{MR1332889}, Theorem
2 and Corollary 2 on page 525 using different approach.

We need to relax the condition on the degree of the numerator and
the denominator. 

If we could allow poles at infinity, then the degree of the
numerator can be larger than that of the denominator. More precisely,
we can easily obtain the following
\begin{thm}
Using the notations from Borwein-Erdélyi Theorem, 
if $R$ is a polynomial with $\deg(R)>m$ and
$f\left(u\right)=R\left(u\right)/\prod_{j=1}^{m}\left(u-a_{j}\right)$
is a rational function, then
\begin{equation}
|f'\left(u\right)|\le\max\left(B_{m}^{+}\left(u\right)+\deg\left(R\right)-m,\, B_{m}^{-}\left(u\right)\right)||f||_{\bT},\qquad w\in\bT.\label{eq:rat_ineq_suff}
\end{equation}
\end{thm}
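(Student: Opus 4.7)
\emph{Proof plan.} The idea is to reduce the case $\deg(R)>m$ to the original Borwein--Erdélyi theorem by artificially adding $\deg(R)-m$ simple poles at large finite points and then taking a limit. Set $k=\deg(R)$. For each $\rho>1$ pick distinct points $a_{m+1}(\rho),\dots,a_{k}(\rho)$ with $|a_{j}(\rho)|=\rho$ and define
$$
f_{\rho}(u):=\frac{R(u)}{\prod_{j=1}^{m}(u-a_{j})\cdot\prod_{j=m+1}^{k}(1-u/a_{j}(\rho))}.
$$
Clearing the extra denominators shows that $f_{\rho}(u)=c_{\rho}R(u)/\prod_{j=1}^{k}(u-a_{j})$ for a nonzero constant $c_{\rho}$, so $f_{\rho}$ has $k$ simple poles and numerator of degree $k$. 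Hence Borwein--Erdélyi applies with the enlarged pole set $\{a_{1},\dots,a_{k}\}$ and yields
$$
|f_{\rho}'(u)|\le\max\bigl(B_{k}^{+}(u),\,B_{k}^{-}(u)\bigr)\|f_{\rho}\|_{\bT},\qquad u\in\bT.
$$

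Every added pole lies outside $\bD$, so $B_{k}^{-}(u)=B_{m}^{-}(u)$ and
$$
B_{k}^{+}(u)=B_{m}^{+}(u)+\sum_{j=m+1}^{k}\frac{|a_{j}(\rho)|^{2}-1}{|a_{j}(\rho)-u|^{2}}.
$$
As $\rho\to\infty$ each added summand $(\rho^{2}-1)/|a_{j}(\rho)-u|^{2}$ tends to $1$ uniformly in $u\in\bT$, so $B_{k}^{+}(u)\to B_{m}^{+}(u)+(k-m)$. Simultaneously the perturbation factors $1-u/a_{j}(\rho)$ and their $u$-derivatives tend to $1$ and $0$ uniformly on a neighbourhood of $\bT$, so $f_{\rho}\to f$ and $f_{\rho}'\to f'$ uniformly on $\bT$, and in particular $\|f_{\rho}\|_{\bT}\to\|f\|_{\bT}$. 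Passing to the limit in the Borwein--Erdélyi bound for $f_{\rho}$ produces \eqref{eq:rat_ineq_suff}.

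No genuine obstacle is expected: the extension is essentially a free corollary of the original Borwein--Erdélyi inequality, obtained by realising the pole of $f$ at $\infty$ of multiplicity $\deg(R)-m$ as the limit of $\deg(R)-m$ simple finite poles, each contributing exactly $1$ to $B_{k}^{+}$ in the limit. The only mildly delicate point is the uniform $C^{1}$ convergence $f_{\rho}\to f$ near $\bT$, but this is immediate from the explicit form of the perturbation factors $1-u/a_{j}(\rho)$.
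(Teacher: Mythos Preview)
Your proof is correct and follows essentially the same approach as the paper: both realise the order-$d$ pole of $f$ at $\infty$ as a limit of finite poles outside the unit disk, apply the original Borwein--Erd\'elyi inequality, and let those poles tend to $\infty$. The paper places a single real pole of multiplicity $d$ at $\tau>1$ (setting $f_{1}(u)=f(u)/(u-\tau)^{d}$ and estimating with the factors $(\tau\pm1)^{d}$), whereas your normalisation with the factors $1-u/a_{j}(\rho)$ makes the convergence $f_{\rho}\to f$ a bit cleaner; the two arguments are otherwise the same.
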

\begin{proof}
Let $d:=\deg\left(R\right)-m>0$,
and let $f_{1}\left(\tau;\; u\right)=f_{1}\left(u\right):=\frac{f\left(u\right)}{\left(u-\tau\right)^{d}}$,
where $\tau\in\bR$, $\tau>1$. Then $\left(\tau-1\right)^{d}|f_{1}\left(u\right)|\le|f\left(u\right)|\le\left(\tau+1\right)^{d}|f_{1}\left(u\right)|$
for $|u|=1$, so 
\[
||f_{1}||_{\bT}\le\frac{1}{\left(\tau-1\right)^{d}}||f||_{\bT}.
\]
Since $f_{1}'\left(u\right)=f'\left(u\right)\frac{1}{\left(u-\tau\right)^{d}}-d\; f\left(u\right)\frac{1}{\left(u-\tau\right)^{d+1}}$,
therefore
\[
|f_{1}'\left(u\right)|\ge|f'\left(u\right)|\frac{1}{\left(\tau+1\right)^{d}}-d||f||_{\bT}\frac{1}{\left(\tau-1\right)^{d+1}}.
\]
Using Borwein-Erdélyi Theorem for $f_{1}$, $|u|=1$, 
\[
|f_{1}'\left(u\right)|\le\max\left(B_{m}^{+}\left(u\right)+d\frac{\tau^{2}-1}{|u-\tau|^{2}},\; B_{m}^{-}\left(u\right)\right)||f_{1}||_{\bT}.
\]
Letting $\tau\rightarrow\infty$ and combining the last three displayed
estimates, we obtain the Theorem.
\end{proof}
Note that if we let all the poles tend to infinity, then we get back
the original Bernstein (Riesz) inequality for polynomials on the unit
disk. Let us also remark that the original proof of Borwein and Erdélyi
also proves (\ref{eq:rat_ineq_suff}), with little modifications.

The relation with Green's functions is as follows. It is well known
(see e.g. \cite{MR1485778}, p.109) that Green's function of the unit
disk $\bD$ with pole at $a\in\bD$ is 
\[
g_{\bD}\left(u,a\right)=\log\left|\frac{1-\overline{a}u}{u-a}\right|
\]
and Green's functions of the complement of the unit disk $\bD^{*}=\left\{ |u|>1\right\} \cup\left\{ \infty\right\} $
with pole at $a\in\bC$, $|a|>1$ and with pole at infinity are 
\[
g_{\bD^{*}}\left(u,a\right)=\log\left|\frac{1-\overline{a}u}{u-a}\right|\mbox{ and }g_{\bD^{*}}\left(u,\infty\right)=\log|u|.
\]
For the normal derivatives elementary calculations give ($|u|=1$,
$n_{1}\left(u\right)=-u$ is the inner normal, $n_{2}\left(u\right)=u$
is the outer normal) 
\begin{gather}
\frac{\partial}{\partial n_{1}\left(u\right)}g_{\bD}\left(u,a\right)=\lim_{t\rightarrow0+}\frac{\log\left|\frac{1-\overline{a}\left(1-t\right)u}{\left(1-t\right)u-a}\right|}{t}=\frac{1-|a|^{2}}{|u-a|^{2}},\label{eq:ndgd}\\
\frac{\partial}{\partial n_{2}\left(u\right)}g_{\bD^{*}}\left(u,a\right)=\lim_{t\rightarrow0+}\frac{\log\left|\frac{1-\overline{a}\left(1+t\right)u}{\left(1+t\right)u-a}\right|}{t}=\frac{|a|^{2}-1}{|u-a|^{2}},\label{eq:ndgdsf}\\
\frac{\partial}{\partial n_{2}\left(u\right)}g_{\bD^{*}}\left(u,\infty\right)=\lim_{t\rightarrow0+}\frac{\log|\left(1+t\right)u|}{t}=1.\label{eq:ndgdsi}
\end{gather}
They are also mentioned in \cite{MR2380804}, p.1739. 

\medskip{}
Using this notation, we can reformulate these last two theorems
as follows.
This is actually the result of Borwein and Erdélyi with slightly different wording.

\begin{thm}
\label{thm:genBorweinErdelyi}Let $f\left(u\right)=R\left(u\right)/Q\left(u\right)$
be an arbitrary rational function with no poles on the unit circle
where $R$ and $Q$ are polynomials. Denote the poles of $f$ on $\bCi$
by $a_{1},\dots,a_{m}\in\bCi\setminus\left\{ |u|=1\right\} $ where
each pole is repeated as many times as its order. Then, for $u\in\bT$,
\begin{multline}
|f'\left(u\right)|\le||f||_{\bT}\\
\cdot\max\left(\sum_{j:|a_{j}|<1}\frac{\partial}{\partial n_{1}\left(u\right)}g_{\bD}\left(u,a_{j}\right),\ \sum_{j:|a_{j}|>1}\frac{\partial}{\partial n_{2}\left(u\right)}g_{\bD^{*}}\left(u,a_{j}\right)\right).\label{ineq:genBorweinErdelyi}
\end{multline}

\end{thm}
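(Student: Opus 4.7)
The plan is to prove Theorem \ref{thm:genBorweinErdelyi} as a translation of the two versions of the Borwein-Erdélyi theorem already available in the excerpt into the language of normal derivatives of Green's functions, using the identities (\ref{eq:ndgd})--(\ref{eq:ndgdsi}).

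First I would write $f = R/Q$ with $R$, $Q$ coprime polynomials and factor $Q$ as a constant times $\prod_{j=1}^{\mu}(u-a_j)$, where $a_1,\dots,a_\mu \in \bC$ are the finite poles of $f$ listed with multiplicity. Since rescaling $f$ by a nonzero constant affects neither $|f'(u)|/\|f\|_\bT$ nor the pole set, I may assume the leading coefficient of $Q$ equals $1$. Set $d := \max(0, \deg R - \mu)$; this is the order of the pole of $f$ at infinity, so the complete pole list on $\bCi$ (with multiplicities) consists of $a_1,\dots,a_\mu$ together with $d$ copies of $\infty$, and in the notation of the statement one has $m = \mu + d$.

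Next I split into two cases. If $\deg R \le \mu$, so $d = 0$, the original Borwein-Erdélyi theorem applied with its parameter equal to $\mu$ gives $|f'(u)| \le \max(B_\mu^+(u), B_\mu^-(u))\|f\|_\bT$. If $\deg R > \mu$, the extension (\ref{eq:rat_ineq_suff}) just established yields $|f'(u)| \le \max(B_\mu^+(u) + d, B_\mu^-(u))\|f\|_\bT$ with $d = \deg R - \mu$. I would then use (\ref{eq:ndgd}) and (\ref{eq:ndgdsf}) to identify each summand in $B_\mu^\pm(u)$ as a normal derivative of the corresponding Green's function evaluated at a finite pole, and invoke (\ref{eq:ndgdsi}) to recognize that the constant summand $d$ appearing in the second case is precisely the accumulation of the $d$ contributions $\frac{\partial}{\partial n_2(u)} g_{\bD^{*}}(u,\infty) = 1$ coming from the pole at infinity (counted with multiplicity). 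In both cases the right-hand side becomes exactly that of (\ref{ineq:genBorweinErdelyi}).

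There is no genuine obstacle here: the entire content is already contained in the two preceding Borwein-Erdélyi-type statements, and what remains is pure bookkeeping -- counting the pole at infinity with its correct multiplicity and matching each explicit summand $\frac{1-|a_j|^2}{|u-a_j|^2}$, $\frac{|a_j|^2-1}{|u-a_j|^2}$, or $1$ with the appropriate normal derivative of the Green's function of $\bD$ or $\bD^{*}$, so that the two cases are expressed uniformly by the single maximum on the right of (\ref{ineq:genBorweinErdelyi}).
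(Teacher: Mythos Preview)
Your proposal is correct and matches the paper's approach exactly: the paper does not give a separate proof of Theorem~\ref{thm:genBorweinErdelyi} but simply presents it as a reformulation of the original Borwein--Erd\'elyi theorem together with its extension (\ref{eq:rat_ineq_suff}), using the Green's function identities (\ref{eq:ndgd})--(\ref{eq:ndgdsi}) to rewrite the summands and to account for the pole at infinity. Your case split and bookkeeping are precisely what is needed to make this reformulation explicit.
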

Note that if $\deg\left(R\right)>\deg\left(Q\right)$, then $f$ has
a pole at $\infty$, therefore it is repeated $\deg\left(R\right)-\deg\left(Q\right)$
times and this pole at $\infty$ is taken into account in the second
term of maximum. Inequality (\ref{ineq:genBorweinErdelyi}) is sharp,
the factor on the right hand side cannot be replaced for smaller constant,
see, e.g., \cite{MR1367960}, p. 324.

\section{Mapping complement of a system of arcs onto domains bounded by Jordan
curves with rational functions}

Let $K$ be a finite union of $C^{2}$ smooth, disjoint Jordan arcs
on the complex plane, that is,
\[
K=\cup_{j=1}^{k_{0}}\gamma_{j},\mbox{ where }\gamma_{j}\cap\gamma_{k}=\emptyset,\ j\ne k.
\]
Denote the endpoints of $\gamma_{j}$ by $\zeta_{2j-1},\zeta_{2j}$,
$j=1,\dots,k_{0}$. 

We need the following Proposition to transfer our setting. Although
we will use it for one analytic Jordan arc, it can be useful for further
researches. 

After we worked out the proof, we learned that Widom developed very
similar open-up Lemma in his work, see \cite{MR0239059}, p. 205-207.
The difference is that he considers $C^{k}$ smooth arcs with Hölder
continuous $k$-th derivative (see also p. 145) while we need this
open-up technique for analytic arcs. Furthermore, there is a difference
regarding the number of poles. This is discussed after the proof.
\begin{prop}
\label{prop:open-up}There exists a rational function $F$ and a domain
$G\subset\bCi$ such that $\bC\setminus G$ is a compact set with
$k_{0}$ components, $\partial\left(\bCi\setminus G\right)=\partial G$
is union of finitely many smooth Jordan curves and $F$ is a conformal
bijection from $G$ onto $\bCi\setminus K$ with $F(\infty)=\text{\ensuremath{\infty}}$.

Furthermore, if $K$ is analytic, then $\partial G$ is analytic too.\end{prop}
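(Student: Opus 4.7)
The plan is to reduce to the single-arc case $k_0=1$ and then iterate, opening up one arc at a time via a branched double cover. For the base case, let $\gamma$ be a Jordan arc with endpoints $\zeta_1,\zeta_2$, and consider the two-sheeted branched cover $\Sigma$ of $\bCi$ with branch locus $\{\zeta_1,\zeta_2\}$---equivalently, two copies of $\bCi\setminus\gamma$ glued along $\gamma$, i.e.\ the Riemann surface of $\sqrt{(z-\zeta_1)(z-\zeta_2)}$. Riemann--Hurwitz gives genus $0$, so $\Sigma$ is biholomorphic to $\bCi$; I would fix such a biholomorphism, normalized so that one of the two preimages of $\infty$ is sent to $\infty$. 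The projection $\pi\colon\Sigma\to\bCi$ then becomes a degree-$2$ rational function $F$ with $F(\infty)=\infty$. Let $G$ be the component of $\bCi\setminus\pi^{-1}(\gamma)$ containing $\infty$; then $F|_G$ is a conformal bijection onto $\bCi\setminus\gamma$, and $\partial G=\pi^{-1}(\gamma)$ is a single closed curve.

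The only nontrivial regularity check is at the two branch points of $\pi$. Off them, $\pi$ is locally biholomorphic and $\partial G$ inherits the regularity of $\gamma$ directly. At a branch point, $\pi$ takes the form $w\mapsto w^2$ in a suitable local coordinate. Writing the local parametrization of $\gamma$ at $\zeta_j$ as $\gamma(t)-\zeta_j=t\,h(t)$ with $h$ $C^2$ (respectively analytic) and $h(0)\neq 0$, the substitution $t=u^2$ gives $w(u)=u\sqrt{h(u^2)}$ as a $C^2$ (respectively analytic) parametrization of $\partial G$ through $w=0$, with $w'(0)=\sqrt{h(0)}\neq 0$. Hence $\partial G$ is a smooth (respectively analytic) Jordan curve.

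For $k_0>1$, I would iterate. Apply the base case to $\gamma_1$ to obtain $(F_1,G_1)$, and form the pullbacks $\alpha_j:=F_1^{-1}(\gamma_j)\cap G_1$ for $j=2,\dots,k_0$; these are $k_0-1$ pairwise disjoint arcs lying inside the open set $G_1$, disjoint from $\partial G_1$ because the $\gamma_j$ are mutually disjoint. Apply the base construction to $\alpha_2$, producing a degree-$2$ rational $H_2$ and a simply connected domain $\tilde G_2$ with $H_2(\infty)=\infty\in\tilde G_2$, and set $G^{(2)}:=\tilde G_2\cap H_2^{-1}(G_1)$; then $F_1\circ H_2$ is rational and restricts to a conformal bijection from $G^{(2)}$ onto $\bCi\setminus(\gamma_1\cup\gamma_2)$, while $\bCi\setminus G^{(2)}$ has exactly two components, namely $\bCi\setminus\tilde G_2$ and the $H_2$-preimage in $\tilde G_2$ of $\bCi\setminus G_1$. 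Iterating this step $k_0$ times yields the desired $F$ (rational, of degree $2^{k_0}$) and $G$ with $k_0$ components in its complement.

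The substantive analytic content is the branch-point regularity handled in the second paragraph; the rest is bookkeeping. One must verify the disjointness of each newly pulled-back arc from the previously opened-up boundary curves (automatic from bijectivity on each sheet), use the Möbius freedom at every iteration to keep $\infty$ in the correct sheet so that $F(\infty)=\infty$ is preserved, and inductively check that $\bCi\setminus G$ gains exactly one new smooth (or analytic) Jordan component per step.
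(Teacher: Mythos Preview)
Your proof is correct and follows what the paper itself identifies as Widom's approach: iterate a Joukowski-type double cover one arc at a time. The paper takes a genuinely different, non-iterative route. It constructs in one stroke a rational function $F=R/Q$ of degree exactly $k_0+1$ whose critical values are precisely the $2k_0$ endpoints $\zeta_1,\dots,\zeta_{2k_0}$; this is done by writing $1/F'$ as a rational function with prescribed simple poles at the $\zeta_j$ and then choosing coefficients so that all residues vanish, forcing the antiderivative to be rational. The domain $G$ is taken to be the unbounded component of $F^{-1}(\bCi\setminus K)$, and univalence of $F|_G$ is verified by a winding-number argument (or, alternatively, via monodromy of $F^{-1}$).

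The tradeoff is exactly what the paper remarks on after its proof: your construction yields $\deg F=2^{k_0}$ with $2^{k_0}-1$ finite poles distributed unevenly among the $k_0$ complementary components (and depending on the order in which the arcs are opened), whereas the paper's direct construction gives $\deg F=k_0+1$ with one simple pole per component. For the application that drives the paper---a single analytic arc, $k_0=1$---the two constructions coincide, and your Riemann--Hurwitz packaging of the base case is arguably cleaner than the residue computation. Your branch-point regularity check via $w(u)=u\sqrt{h(u^2)}$ is essentially the same argument the paper gives at the end of its proof for the analyticity of $\partial G$.
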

\begin{proof}
First, we show that there are polynomials $R$, $Q$ such that $\deg\left(R\right)=k_{0}+1$,
$\deg\left(Q\right)=k_{0}$, 
\[
F\left(u\right):=\frac{R\left(u\right)}{Q\left(u\right)}
\]
and 
\begin{equation}
F'\left(u\right)=0\Leftrightarrow F\left(u\right)\in\left\{ \zeta_{1},\dots,\zeta_{2k_{0}}\right\} .\label{eq:fminimality}
\end{equation}
Obviously, $F'\left(u\right)=\left(R'\left(u\right)Q\left(u\right)-R\left(u\right)Q'\left(u\right)\right)/Q^{2}\left(u\right)$
and the numerator is a polynomial of degree $2k_{0}$. Let $A\left(u\right):=\prod_{j=1}^{2k_{0}}\left(u-\zeta_{j}\right)$.
Taking reciprocal, $1/F'=Q^{2}/A$, that is, the location of the poles
are known. Our goal is to find $\beta_{0},\beta_{1},\beta_{2},\ldots,\beta_{2k_{0}}\in\bC$
such that 
\[
\int\frac{1}{\beta_{0}+\sum_{j=1}^{2k_{0}}\frac{\beta_{j}}{u-\zeta_{j}}}\, du\ \mbox{is a rational function.}
\]
Or equivalently, $F_{1}\left(u\right):=\frac{\prod_{k}\left(u-\zeta_{k}\right)}{\beta_{0}\prod_{k}\left(u-\zeta_{k}\right)+\sum_{j>0}\beta_{j}\prod_{k\ne j}\left(u-\zeta_{k}\right)}$
must have $0$ residue everywhere, $\res\left(F_{1},u\right)=0$ for
all $u\in\bC$. Since $\zeta_{k}$'s are pairwise different, $\prod_{k\ne j}\left(u-\zeta_{k}\right)$,
$j=1,2,\ldots,2k_{0}$ and $\prod_{k}\left(u-\zeta_{k}\right)$ are
linearly independent, so we can choose $\beta_{j}$'s so that 
\[
\beta_{0}\prod_{k}\left(u-\zeta_{k}\right)+\sum_{j>0}\beta_{j}\prod_{k\ne j}\left(u-\zeta_{k}\right)=\left(u-u^{*}\right)^{2k_{0}}
\]
where $u^{*}$ will be specified later. Write $A\left(u\right)=\prod_{k}\left(u-\zeta_{k}\right)$
in the form $A\left(u\right)=\sum_{j=0}^{2k_{0}}c_{j}\left(u-u^{*}\right)^{j}$
with suitable $c_{j}$'s. It is easy to see that $\res\left(F_{1},u\right)=0$
for all $u\ne u^{*}$, furthermore $\res\left(F_{1},u^{*}\right)=c_{2k_{0}-1}$.
Comparing the coefficients of $A\left(u\right)$, we obtain $c_{2k_{0}}=1$,
$c_{2k_{0}-1}=-\left(\sum_{j=1}^{2k_{0}}\zeta_{j}\right)+2k_{0}u^{*}$.
Rearranging the expression for $c_{2k_{0}-1}$, $u^{*}$ must satisfy
the following equation
\[
u^{*}=\frac{\sum_{j=1}^{2k_{0}}\zeta_{j}}{2k_{0}}.
\]
With this choice, there exists $F=\int F_{1}$ with the desired properties.

\smallskip{}

The domain $G$ is constructed as follows. Denote the unbounded component
of $F^{-1}\left[\bCi\setminus K\right]$ by $G$. We prove that $G$
is a domain and its boundary consists of finitely many Jordan curves
and those curves are smooth. Locally, if $z\in\gamma_{j}$ for some
$\gamma_{j}$ and $z$ is not endpoint of $\gamma_{j}$, then, by
the construction, $z$ is not a critical value. In other words, for
any $u$ such that $F\left(u\right)=z$, we know $F'\left(u\right)\ne0$
($u$ is not a critical place). If $z\in\gamma_{j}$ is an endpoint
and $u_{1}$ is any of its inverse image, then $F'\left(u_{1}\right)=0$
by (\ref{eq:fminimality}) and since the degree of $R$ and $Q$ are
minimal, $F''\left(u_{1}\right)\ne0$. Therefore $F\left(u\right)\approx c\left(u-u_{1}\right)^{2}+z$,
and the inverse image $F^{-1}\left[\gamma_{j}\right]$ of $\gamma_{j}$
near $u_{1}$ is a smooth, simple arc. So each bounded component of
$\bC\setminus G$ is such a compact set that it is a closure of a
Jordan domain. 

Using continuity and connectedness, $\bC_{\infty}\setminus F^{-1}\left[\bC_{\infty}\setminus K\right]$
has at least $k_{0}$ bounded components. If there were more than
$k_{0}$ components, then we obtain contradiction as follows. The
boundary of each component is mapped into $K$, so there should be
more than $2k_{0}$ critical points, but this contradicts the minimality
of $F$. Denote the boundary of the components by $\kappa_{j}$, $j=1,\ldots,k_{0}$.
These $\kappa_{j}$'s are smooth Jordan curves and assume $\kappa_{j}=\kappa_{j}\left(t\right)$,
$t\in\left[0,2\pi\right]$.

It is clear that each component has nonempty interior and contains
at least one pole of $F$, otherwise $F$ maps that component onto
some open, bounded, nonempty set and this set would intersect $\bC_{\infty}\setminus K$.
Therefore each component contains exactly one pole which is simple
by the minimality assumption.

Now, $F=R/Q$ is univalent on $G$ because of the followings. Take
smooth Jordan curves $\kappa_{j,\delta}\left(t\right)$, $t\in\left[0,2\pi\right]$
satisfying the next properties: $\kappa_{j,\delta}\subset G$, $\kappa_{j,\delta}\left(t\right)\rightarrow\kappa_{j}\left(t\right)$
as $\delta\rightarrow0$ and $\kappa_{j,\delta}'\left(t\right)\rightarrow\kappa_{j}'\left(t\right)$
as $\delta\rightarrow0$ and $\kappa_{0,\delta}\left(t\right):=1/\delta\exp\left(it\right)$.
Since $\deg\left(R\right)=\deg\left(Q\right)+1$, $F\left(u\right)=c_{1}u+c_{0}+o\left(1\right)$
as $u\rightarrow\infty$ therefore $F\left(\kappa_{0,\delta}\left(t\right)\right)\rightrightarrows\infty$
as $\delta\rightarrow0$ and, by continuity, $\dist\left(F\left(\kappa_{j,\delta}\right),\gamma_{j}\right)\rightarrow0$.
Since $F$ has no critical values outside $K$, the $F\left(\kappa_{j,\delta}\right)$'s
are smooth Jordan curves. Fix $b\in\bC\setminus K$, then there is
(at least one) $b'\in G$ with $F\left(b'\right)=b$, because $F\left(G\right)$
is open, $F\left(G\right)\subset\bC\setminus K$ and $F\left(\partial G\right)=F\left(\kappa_{1}\cup\ldots\cup\kappa_{k_{0}}\right)\subset K$.
If $\delta>0$ is small enough, then $b\in\inter F\left(\kappa_{0,\delta}\right)$
and $b\in\bC\setminus\inter F\left(\kappa_{j,\delta}\right)$ ($j=1,\ldots,k_{0}$),
so $\mathrm{index}\left(b,F\left(\kappa_{0,\delta}\right)\cup F\left(\kappa_{1,\delta}\right)\cup\ldots\cup F\left(\kappa_{k_{0},\delta}\right)\right)=1$.
Therefore $\mathrm{index}\left(b',\kappa_{0,\delta}\cup\kappa_{1,\delta}\cup\ldots\cup\kappa_{k_{0},\delta}\right)=1$,
so there is exactly one inverse image, this shows the univalence of
$F$.

We can give another proof for the univalence as follows. There is
a (local) branch of $F^{-1}$ such that $F^{-1}\left[z\right]=z/c_{1}+..$
as $z\rightarrow\infty$, in other words, $\infty$ is not a branch
point of $F^{-1}$. Furthermore, the function $F$ has branch points
only at $\zeta_{j}$'s, $j=1,\ldots,2k_{0}$ and it behaves as a square
root there. Therefore every analytic continuations along any curve
in $\bC\setminus K$ give the same function element. Now we use Lemma
2, p. 175 in \cite{MR1007599} with this (local) branch. Therefore
we can choose a (global) regular branch of $F^{-1}$ such that $F^{-1}\left[\infty\right]=\infty$.
Since this branch is regular and $F$ is a rational function, there
is no other inverse image of $\infty$ by $F^{-1}$ in $G$. By the
construction of $G$ and applying the maximum principle, we have $g_{\bCi\setminus K}\left(F\left(u\right),\infty\right)\equiv g_{G}\left(u,\infty\right)$,
$u\in G$. Using the majorization principle (see \cite{MR2765937},
Theorem 1 on p. 624) or Theorem 4.4.1 on p. 112 from \cite{MR1334766},
we obtain that $F$ is conformal bijection from $G$ onto $\bCi\setminus K$. 

As for the smoothness assertion ($\partial G$ analytic), this follows
from standard considerations as follows. Without loss of generality,
we may assume that $z=\kappa\left(t\right)=t+c_{1}t+c_{2}t^{2}+\ldots$,
is a convergent power series for $0\le t\le t_{0}$ and $z=F\left(u\right)$
is such that $F\left(0\right)=0$, $F'\left(0\right)=0$ and $F''\left(0\right)\ne0$.
It is known, see e.g. \cite{Stoilow}, p. 286, that the two branches
of the inverse of $F$ near $z=0$ can be written as $G_{0}\left(z\right)\pm\sqrt{z}G_{1}\left(z\right)$
where $G_{0},G_{1}$ are holomorphic functions. Denote them by $F_{1}^{-1}$
and $F_{2}^{-1}$. This way $\gamma_{1}\left(t\right):=F_{1}^{-1}\left[\kappa\left(t^{2}\right)\right]=G_{0}\left(\kappa\left(t^{2}\right)\right)+t\sqrt{1+\kappa_{1}\left(t^{2}\right)}G_{1}\left(\kappa\left(t^{2}\right)\right)$
is a convergent power series in $t\in\left[0,t_{1}\right]$ and similarly
for $\gamma_{2}\left(t\right):=F_{2}^{-1}\left[\kappa\left(t^{2}\right)\right]$
and $\gamma_{1}'\left(0\right)\ne0$. Considering $\gamma_{1}\left(-t\right)$
for $t\in\left[0,t_{1}\right]$, we see that $\gamma_{2}\left(t\right)=\gamma_{1}\left(-t\right)$,
so $\gamma_{1}$ is actually a convergent power series and it parametrizes
the two joining arc.
\end{proof}
As for the number of poles, Widom's open-up mapping is constructed
as iterating the Joukowskii mapping (composed with a suitable linear
mapping in each step) for each arc and that open-up mapping has $2^{k_{0}}$
different, simple poles and the location of poles also depends on
the order of arcs.
In contrast, our open-up rational function has $k_0$ simple poles.

\bigskip{}

With this Proposition, we switch from polynomials on Jordan arcs to
rational functions on Jordan curves as follows. We use the following
notations, assumptions.

\bigskip{}

Fix one, $C^{2}$ smooth Jordan arc $\gamma$ with endpoints $\zeta_{1}$
and $\zeta_{2}$ and let $z\in\gamma$, $z\ne\zeta_{1}$, $z\ne\zeta_{2}$.
Denote the two normal vectors of unit length at $z$ to $\gamma$
by $n_{1}\left(z\right)$, $n_{2}\left(z\right)$, where $n_{1}\left(z\right)=-n_{2}\left(z\right)$.
We may assume that $n_{1}$ and $n_{2}$ depend continuously on $z$.
We use the same letter for normals in different planes and from the
context, it is always clear that which arc we refer to. We use
the rational mapping $F$ and the domain $G_{2}:=G$ from the previous
Proposition for $\gamma$. Denote the inward normal vector to $\partial G$
at $u\in\partial G$ by $n_{2}\left(u\right)$ and the outward normal
vector to $\partial G$ at $u$ by $n_{1}\left(u\right)$, $n_{2}\left(u\right)=-n_{1}\left(u\right)$.
It is easy to see that there are two inverse images of $z$: $u_{1}=u_{1}\left(z\right),\ u_{2}=u_{2}\left(z\right)\in\partial G$
(such that $F\left(u_{1}\right)=F\left(u_{2}\right)=z$) and we can
assume that $u_{1},u_{2}$ are continuous functions of $z.$ 

By reindexing $u_{1}$ and $u_{2}$, we may assume that the normal
vector $n_{2}\left(u_{1}\right)$ is mapped by $F$ to the normal
vector $n_{2}\left(z\right)$. This immediately implies that $n_{1}\left(u_{1}\right)$,
$n_{2}\left(u_{2}\right)$, $n_{1}\left(u_{2}\right)$ are mapped
by $F$ to $n_{1}\left(z\right)$, $n_{1}\left(z\right)$, $n_{2}\left(z\right)$
respectively.

\begin{figure}
\begin{centering}
\includegraphics[width=1\textwidth]{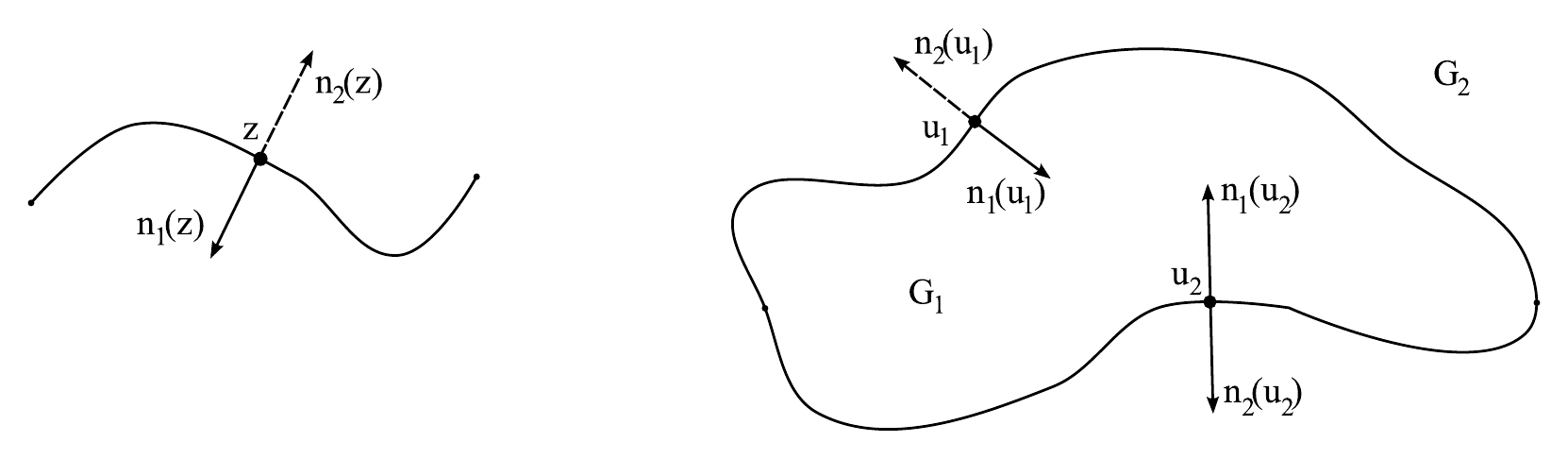}
\par\end{centering}

\protect\caption{The $\gamma$, $z$, $G_{1}$ and $G_{2}$ with the normal vectors}
\label{fig:onearconecurve}

\end{figure}

Let us denote the domain $\bC\setminus\left(G\cup\partial G\right)$
by $G_{1}$. Since $\deg F=2$ and $F$ is a conformal bijection from
$G_{2}$ onto $\bC_{\infty}\setminus\gamma$, $F$ is a conformal
bijection from $ $$G_{1}$ onto $\bC_{\infty}\setminus\gamma$. For
simplicity, let us denote the inverse of $F$ onto $G_{1}$ by $F_{1}^{-1}$
and onto $G_{2}$ by $F_{2}^{-1}$.

These geometrical objects are depicted in Figure \ref{fig:onearconecurve}
where we indicated the normal vectors $n_{2}\left(z\right)$ and $n_{2}\left(u_{1}\right)$
with dashed arrows (we fix the notations with their help) and we indicated
the other normal vectors with simple (not dashed) arrows (their indexings
are consequence of the earlier two vectors).
\begin{prop}
\label{prop:Green_trf1}Using the notations above, for the Green's
functions of $G=G_{2}$ and $G_{1}$ and for $b\in\bC_{\infty}\setminus K$
we have
\begin{multline*}
\frac{\partial}{\partial n_{1}\left(z\right)}g_{\bCi\setminus K}\left(z,b\right)=\frac{\partial}{\partial n_{1}\left(u_{1}\right)}g_{G_{1}}\left(u_{1},F_{1}^{-1}\left(b\right)\right)/\left|F'\left(u_{1}\right)\right|\\
=\frac{\partial}{\partial n_{2}\left(u_{2}\right)}g_{G_{2}}\left(u_{2},F_{2}^{-1}\left(b\right)\right)/\left|F'\left(u_{2}\right)\right|
\end{multline*}
and, similarly for the other side,
\begin{multline*}
\frac{\partial}{\partial n_{2}\left(z\right)}g_{\bCi\setminus K}\left(z,b\right)=\frac{\partial}{\partial n_{1}\left(u_{2}\right)}g_{G_{1}}\left(u_{2},F_{1}^{-1}\left(b\right)\right)/\left|F'\left(u_{2}\right)\right|\\
=\frac{\partial}{\partial n_{2}\left(u_{1}\right)}g_{G_{2}}\left(u_{1},F_{2}^{-1}\left(b\right)\right)/\left|F'\left(u_{1}\right)\right|.
\end{multline*}

For arbitrary polynomial $P$, let $f_{P}\left(u\right)=f\left(u\right):=P\left(F\left(u\right)\right)$.
Then $\left\Vert P\right\Vert _{\gamma}=\left\Vert f\right\Vert _{\partial G}$.\end{prop}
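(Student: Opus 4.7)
The plan is to combine the conformal invariance of Green's functions with the chain rule for directional derivatives. The indexing convention on $u_1,u_2$ and on the four normals, fixed just before the statement, already pairs each normal on $\partial G$ with the correct normal at $z$, so after the invariance step the argument is essentially bookkeeping.

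First I would record the conformal invariance identity $g_{\bCi\setminus K}(F(u),b)=g_{G_i}(u,F_i^{-1}(b))$ for $u\in G_i$, $i=1,2$. Both sides are harmonic in $u$ off the respective poles, vanish on the boundary (since by Proposition~\ref{prop:open-up} each $F_i$ is a conformal bijection of $G_i$ onto $\bCi\setminus K$ and the boundaries correspond), and carry the strength-one logarithmic singularity at the pole, so uniqueness of the Green function forces equality. Then, parametrizing the one-sided curve $t\mapsto u_1+tn_1(u_1)$, $t>0$, and pushing it forward by $F$, the image $z(t):=F(u_1+tn_1(u_1))$ satisfies $z'(0)=F'(u_1)n_1(u_1)$, so $|z'(0)|=|F'(u_1)|$ and, by the convention, $z'(0)$ points in the direction $n_1(z)$. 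Differentiating both sides of $g_{\bCi\setminus K}(z(t),b)=g_{G_1}(u_1+tn_1(u_1),F_1^{-1}(b))$ at $t=0+$ yields
\[
|F'(u_1)|\cdot\frac{\partial}{\partial n_1(z)}g_{\bCi\setminus K}(z,b)=\frac{\partial}{\partial n_1(u_1)}g_{G_1}(u_1,F_1^{-1}(b)),
\]
which is the first of the four equalities after dividing by $|F'(u_1)|$. The remaining three equalities come from the same one-line computation with the triple $(u_1,n_1(u_1),G_1)$ replaced respectively by $(u_2,n_2(u_2),G_2)$, $(u_1,n_2(u_1),G_2)$ and $(u_2,n_1(u_2),G_1)$; the convention stated before the proposition tells us in each case whether the pushed-forward normal points in the $n_1(z)$ or the $n_2(z)$ direction.

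For the norm identity, $F$ restricted to $\partial G$ is a surjection onto $\gamma$ (two-to-one except at the two endpoints), so $\{|f(u)|:u\in\partial G\}=\{|P(z)|:z\in\gamma\}$ and hence $\|f\|_{\partial G}=\|P\|_\gamma$. There is no genuine obstacle in this proposition; the only step demanding care is matching each preimage normal with the correct normal at $z$, and that has been settled in advance by the indexing convention.
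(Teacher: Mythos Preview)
Your proof is correct and follows essentially the same route as the paper: conformal invariance of Green's functions under $F_i:G_i\to\bCi\setminus K$, followed by differentiation in the normal direction. The paper's own proof is a two-line reference to the invariance identities $g_{\bCi\setminus K}(F(u),b)=g_{G_i}(u,F_i^{-1}(b))$ and leaves the chain-rule step (producing the $|F'(u_j)|$ factor) and the normal-matching to the reader; you have written these out explicitly, which is fine.
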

\begin{proof}
This immediately follows from the conformal invariance of Green's
functions
\[
g_{\bC_{\infty}\setminus K}\left(F\left(u\right),b\right)=g_{G_{1}}\left(u,F_{1}^{-1}\left(b\right)\right)
\]
and 
\[
g_{\bC_{\infty}\setminus K}\left(F\left(u\right),b\right)=g_{G_{2}}\left(u,F_{2}^{-1}\left(b\right)\right).
\]
See e.g. \cite{MR1334766}, p. 107, Theorem 4.4.4.
\end{proof}
This Proposition implies that it is enough to take into account the
normal derivatives at, say, $u_{1}$ only , i.e. $\frac{\partial}{\partial n_{2}\left(u_{1}\right)}g_{G_{2}}\left(u_{1},F_{2}^{-1}\left(b\right)\right)$
and $\frac{\partial}{\partial n_{1}\left(u_{1}\right)}g_{G_{1}}\left(u_{1},F_{1}^{-1}\left(b\right)\right)$
only.

\section{Conformal mappings on simply connected domains}

Here $G_{1}$ is the bounded domain from the previous section and
$G_{2}$ is the unbounded domain from the previous section. Actually,
$G_{2}=\bCi\setminus\left(G_{1}^{-}\right)$. As earlier, $\bD=\left\{ v:\ \left|v\right|<1\right\} $
and $\bD^{*}=\left\{ v:\ \left|v\right|>1\right\} \cup\left\{ \infty\right\} $.
With these notations, $\partial G_{1}=\partial G_{2}$. Using Kellogg-Warschawski
theorem (see e.g. \cite{MR1217706} p. 49, Theorem 3.6), if the boundary
is $C^{1,\alpha}$ smooth, then the Riemann mappings of $\bD,\bD^{*}$
onto $G_{1},G_{2}$ respectively and their derivatives can be extended
continuously to the boundary.

Under analyticity assumption, we can compare the Riemann mappings
as follows.
\begin{prop}
\label{prop:green_deriv_g_one_g_two}Let $u_{0}\in\partial G_{1}=\partial G_{2}$
be fixed. Then there exist two Riemann mappings $\Phi_{1}:\bD\rightarrow G_{1}$,
$\Phi_{2}:\bD^{*}\rightarrow G_{2}$ such that $\Phi_{j}\left(1\right)=u_{0}$
and $\left|\Phi_{j}'\left(1\right)\right|=1$, $j=1,2$. 

If $\partial G_{1}=\partial G_{2}$ is analytic, then there exist
$0\le r_{1}<1<r_{2}\le\infty$ such that $\Phi_{1}$ extends to $D_{1}:=\left\{ v:\ \left|v\right|<r_{2}\right\} $,
$G_{1}^{+}:=\Phi_{1}\left(D_{1}\right)$ and $\Phi_{1}:D_{1}\rightarrow G_{1}^{+}$
is a conformal bijection, and similarly, $\Phi_{2}$ extends to $D_{2}:=\left\{ v:\ \left|v\right|>r_{1}\right\} \cup\left\{ \infty\right\} $,
$G_{2}^{+}:=\Phi_{2}\left(D_{2}\right)$ and $\Phi_{2}:D_{2}\rightarrow G_{2}^{+}$
is a conformal bijection. \end{prop}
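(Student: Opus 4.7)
The plan is to handle the two halves of the statement separately. For the existence of normalized Riemann maps, I would begin with arbitrary conformal bijections $\tilde{\Phi}_1:\bD\to G_1$ and $\tilde{\Phi}_2:\bD^{*}\to G_2$ supplied by the Riemann mapping theorem. Since $\partial G_1=\partial G_2$ consists of smooth Jordan curves by Proposition~\ref{prop:open-up}, Carathéodory's theorem extends $\tilde{\Phi}_j$ to a boundary homeomorphism, and Kellogg-Warschawski gives a continuous non-vanishing extension of $\tilde{\Phi}_j'$ to the boundary. Then I would exploit the three real dimensions of $\mathrm{Aut}(\bD)$ to impose the two real conditions. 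Concretely, pre-composing $\tilde{\Phi}_1$ with the rotation $v\mapsto z_1 v$, where $z_1:=\tilde{\Phi}_1^{-1}(u_0)\in\bT$, yields a map sending $1$ to $u_0$; a further pre-composition by the one-parameter family of hyperbolic automorphisms $\varphi_a(v)=(v-a)/(1-av)$ with $a\in(-1,1)$ fixes $1$ while producing derivative $\varphi_a'(1)=(1+a)/(1-a)$, which sweeps out $(0,\infty)$. Choosing $a$ so that $|\Phi_1'(1)|=1$ is thus a direct algebraic adjustment. The same construction, transported through the inversion $v\mapsto 1/v$ which identifies $\bD^{*}$ with $\bD$, normalizes $\Phi_2$.

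For the analytic extension under the additional analyticity assumption on the boundary, I would apply the Schwarz reflection principle for analytic arcs. For each point $u^{*}\in\partial G_1$, analyticity provides a local conformal chart $\psi$ from a disk around $0$ that carries a real diameter onto a portion of $\partial G_1$ near $u^{*}$ and the upper half-disk into $G_1$. The composition $\psi^{-1}\circ\Phi_1$ is then holomorphic on a crescent of $\bD$ meeting $\bT$ and sends the corresponding sub-arc of $\bT$ into $\bR$, so classical Schwarz reflection extends it analytically across that arc. Pulling back by $\psi$ extends $\Phi_1$ itself across the corresponding arc. Compactness of $\bT$ patches these local extensions together into a single holomorphic extension on an annular neighborhood $\{|v|<1+\epsilon\}$, and letting $r_2$ be the supremum of admissible radii gives the desired $r_2\in(1,\infty]$. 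The extension of $\Phi_2$ to $\{|v|>r_1\}\cup\{\infty\}$ is produced by the symmetric argument with the roles of interior and exterior exchanged.

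The main technical obstacle is confirming that the extension is a conformal bijection on $D_1$, not merely a holomorphic extension. Local injectivity on a neighborhood of $\bar{\bD}$ is immediate from analyticity of the extension together with $\Phi_1'\neq 0$ on $\bT$ (Kellogg-Warschawski combined with the analytic extension). Global injectivity on a sufficiently small enlargement is established by a standard compactness argument: if it failed on every $\{|v|<1+\epsilon\}$ as $\epsilon\to 0$, one could extract two sequences of distinct points with equal images accumulating either at a single point of $\bar{\bD}$, contradicting local injectivity, or at two distinct points of $\bar{\bD}$, contradicting the Carathéodory boundary homeomorphism for $\tilde{\Phi}_1$ on $\bar{\bD}$. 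Taking $r_2$ as the supremum of radii on which bijectivity persists and setting $G_1^{+}:=\Phi_1(D_1)$ completes the construction, and the analogous treatment yields $r_1$, $G_2^{+}$ and $\Phi_2$.
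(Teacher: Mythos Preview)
Your proposal is correct and follows essentially the same route as the paper: rotation plus a one-parameter family of hyperbolic automorphisms fixing $1$ to normalize the Riemann maps, and the Schwarz reflection principle for analytic arcs to obtain the conformal extensions. The paper's proof is terser---it merely cites a reference for the reflection step and does not spell out the global injectivity argument---so your compactness argument for bijectivity on a slightly enlarged disk is a welcome elaboration rather than a departure.
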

\begin{proof}
The existence of $\Phi_{1}$ follows immediately from the Riemann
mapping theorem by considering arbitrary Riemann mapping and composing
this mapping with a suitable rotation and hyperbolic translation toward
$1$ (that is, $\chi_t\left(z\right)=\left(z-t\right)/\left(1-tz\right)$
with $t\in\left(-1,1\right)$ and $t\rightarrow-1$, $\chi_t'\left(1\right)\rightarrow0$,
and $t\rightarrow1$, $\chi_t'\left(1\right)\rightarrow+\infty$).

The existence of $\Phi_{2}$ follows the same way, using the same
family of hyperbolic translations.

The extension follows from the reflection principle for analytic curves
(see e.g. \cite{MR1344449} pp. 16-21).
\end{proof}
From now on, we fix such two conformal mappings and let $a_{1}:=\Phi_{1}^{-1}\left[F_{1}^{-1}\left[\infty\right]\right]$
and $a_{2}:=\Phi_{2}^{-1}\left[\infty\right]=\Phi_{2}^{-1}\left[F_{2}^{-1}\left[\infty\right]\right]$.

The domains of these analytic extensions are depicted on Figure \ref{fig:twoRiemann}
where $D_{1}$ is the grey region on the right and is mapped onto
$G_{1}^{+}$ by $\Phi_{1}$ which is the grey region on the left.

\begin{figure}
\begin{centering}
\includegraphics[width=1\textwidth]{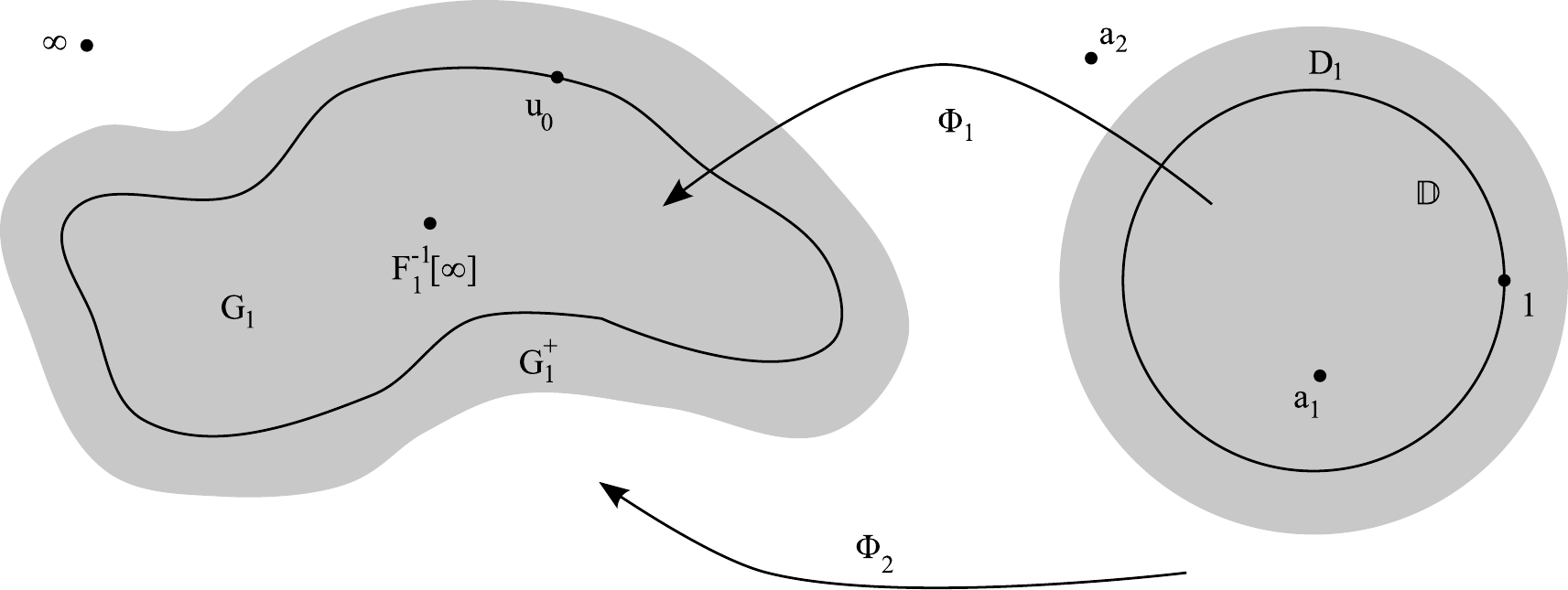}
\par\end{centering}

\protect\caption{The two Riemann mappings and the points}
\label{fig:twoRiemann}
\end{figure}

Using these mappings, we have the following relations between the
normal derivatives of Green's functions and Blaschke factors.
\begin{prop}
\label{prop:green_deriv_unitdisk}The followings hold
\begin{gather*}
\frac{\partial}{\partial n_{1}\left(u_{0}\right)}g_{G_{1}}\left(u_{0},F_{1}^{-1}\left[\infty\right]\right)=\frac{\partial}{\partial n_{1}\left(1\right)}g_{\bD}\left(1,a_{1}\right)=\frac{1-\left|a_{1}\right|^{2}}{\left|1-a_{1}\right|^{2}},\\
\frac{\partial}{\partial n_{2}\left(u_{0}\right)}g_{G_{2}}\left(u_{0},F_{2}^{-1}\left[\infty\right]\right)=\frac{\partial}{\partial n_{2}\left(1\right)}g_{\bD^{*}}\left(1,a_{2}\right)=\frac{\left|a_{2}\right|^{2}-1}{\left|1-a_{2}\right|^{2}},
\end{gather*}
and if $a_{2}=\infty$, then
\[
\frac{\partial}{\partial n_{2}\left(u_{0}\right)}g_{G_{2}}\left(u_{0},F_{2}^{-1}\left[\infty\right]\right)=\frac{\partial}{\partial n_{2}\left(1\right)}g_{\bD^{*}}\left(1,\infty\right)=1.
\]
\end{prop}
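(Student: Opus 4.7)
The plan is to combine the conformal invariance of the Green's function with the boundary normalization $|\Phi_j'(1)| = 1$, and then read off the three values from the already-computed formulas (\ref{eq:ndgd})--(\ref{eq:ndgdsi}). First I would invoke conformal invariance (Theorem 4.4.4 in \cite{MR1334766}) to obtain
\[
g_{G_1}(\Phi_1(v), F_1^{-1}[\infty]) = g_{\bD}(v, a_1), \qquad g_{G_2}(\Phi_2(v), F_2^{-1}[\infty]) = g_{\bD^*}(v, a_2),
\]
on $\bD$ and $\bD^*$ respectively, where these equalities hold by the very definition of $a_1$ and $a_2$ (and, in the case $a_2 = \infty$, by $\Phi_2(\infty) = \infty$).

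Next I would transfer the normal derivative at $u_0 = \Phi_j(1)$. By Proposition \ref{prop:green_deriv_g_one_g_two}, the analyticity of $\partial G$ lets $\Phi_1$ be extended conformally across $\bT$ at $v=1$, with $|\Phi_1'(1)|=1$, so the curve $v(t) = 1 + t\,n_1(1)$ with $n_1(1) = -1$ is mapped by $\Phi_1$ to a smooth curve leaving $u_0$ with unit initial speed in the direction $-\Phi_1'(1)$. By conformality and orientation preservation this direction is exactly the inward normal to $G_1$ at $u_0$, i.e.\ $n_1(u_0)$ in the convention of the preceding section. Substituting this curve into the displayed identity and dividing by $t$ before letting $t \to 0+$ gives
\[
\frac{\partial}{\partial n_1(u_0)} g_{G_1}\!\left(u_0, F_1^{-1}[\infty]\right) = \frac{\partial}{\partial n_1(1)} g_{\bD}(1, a_1),
\]
and the same argument with $\Phi_2$ (using $n_2(1) = +1$ as the inward unit normal of $\bD^*$ at $1$) yields the analogous equality for $G_2$.

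Finally I would apply (\ref{eq:ndgd}), (\ref{eq:ndgdsf}), (\ref{eq:ndgdsi}) directly to the right-hand sides to obtain the three claimed expressions $\frac{1-|a_1|^2}{|1-a_1|^2}$, $\frac{|a_2|^2-1}{|1-a_2|^2}$, and $1$. The only step that requires any care is the orientation check in the middle paragraph: one must verify that the normal $n_1(u_0)$ (outward to $\partial G$ from the $G_2$-side, hence inward to $G_1$) really corresponds to $n_1(1) = -1$ under $\Phi_1$, and that $n_2(u_0)$ (inward to $G_2$) corresponds to $n_2(1) = +1$ under $\Phi_2$. This is precisely what the normalization $|\Phi_j'(1)| = 1$ together with orientation preservation of conformal maps supplies; no other part of the argument requires ingenuity.
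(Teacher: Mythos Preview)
Your proof is correct and follows essentially the same route as the paper: conformal invariance of Green's functions combined with the normalization $|\Phi_j'(1)|=1$ (so that normals are carried to normals with no scaling factor), and then formulas (\ref{eq:ndgd})--(\ref{eq:ndgdsi}) for the explicit values. The paper's argument is terser, but your added orientation check is exactly the point it summarizes in one line.
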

\begin{proof}
The second equalities in all three lines follow from (\ref{eq:ndgd}),
(\ref{eq:ndgdsf}) and (\ref{eq:ndgdsi}).

We know that $\Phi_{1}\left(1\right)=u_{0}$ and $\Phi_{2}\left(1\right)=u_{0}$,
moreover $\left|\Phi_{1}'\left(1\right)\right|=1$, $\left|\Phi_{2}'\left(1\right)\right|=1$
imply that $n_{j}\left(1\right)$ is mapped to $n_{j}\left(u_{0}\right)$
by $\Phi_{j}$, $j=1,2$ and the mappings $\Phi_{j}$, $j=1,2$ also
preserve the length at $1$ (there is no magnifying factor $\left|\Phi_{j}'\left(1\right)\right|^{-1}$
unlike at Proposition \ref{prop:Green_trf1}). Using the conformal
mappings $\Phi_{1}$ and $\Phi_{2}$, and the conformal invariance
of Green's functions, we obtain the first equalities in all three
lines.
\end{proof}

\section{Proof of Theorem \ref{thm:main} with rational functions}

\subsection{Auxiliary results, some notations}

Before we start the proof, let us recall three results. The first
one is Gonchar-Grigorjan estimate when we have one pole only. See
\cite{MR0417417}, Theorem 2 on p. 572 (in the english translation). 
\begin{thm*}
Let $D_{G}\subset\bC$ be a simply connected domain and its boundary
is $C^{1}$ smooth. Let $f_{G}:D_{G}\rightarrow\bC_{\infty}$ be a
meromorphic function on $D_{G}$ such that it has only one pole. Assume
that $f_{G}$ can be extended continuously to the boundary $\partial D_{G}$
of $D_{G}$. Denote $f_{G,r}$ the principal part of $f_{G}$ in $D_{G}$
(with $f_{G,r}\left(\infty\right)=0$) and let $f_{G,h}$ denote the
holomorphic part of $ $$f_{G}$ in $D_{G}$. Denote the order of
the pole of $f_{G}$ by $n_{G}$. Then $f_{G}=f_{G,r}+f_{G,h}$ and
there exists $C_{1}\left(D_{G}\right)>0$ depending on $D_{G}$ only
such that 
\begin{equation}
\left\Vert f_{G,h}\right\Vert _{\partial D_{G}}
\le 
C_{1}\left(D_{G}\right)\left(\log n_{G}+1\right)\left\Vert f_{G}\right\Vert _{\partial D_{G}}
\label{est:GoncharGrigorjan}
\end{equation}
where $\left\Vert .\right\Vert _{\partial D_{G}}$ denotes the sup
norm over the boundary of $D_{G}$. 
\end{thm*}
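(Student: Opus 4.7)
The plan is to reduce to the unit-disk case via a Riemann map and then handle the disk by a Fourier-analytic argument based on partial sums. Since $\partial D_G$ is $C^1$ smooth, the Riemann map $\Phi\colon\bD\to D_G$ extends to a homeomorphism of the closures, so the pullback $\tilde f:=f_G\circ\Phi$ is meromorphic on $\bD$ with a single pole at $a:=\Phi^{-1}(\text{pole of }f_G)$ of order $n_G$, continuous on $\partial\bD$, and $\|\tilde f\|_{\partial\bD}=\|f_G\|_{\partial D_G}$. Decomposing $\tilde f=\tilde f_r+\tilde f_h$ by the Laurent principal part at $a$, one compares $\tilde f_h$ to $f_{G,h}\circ\Phi$: their difference is the regular part of the Laurent expansion of $f_{G,r}\circ\Phi$ at $a$, a holomorphic term whose boundary sup norm is controlled by $\|f_G\|_{\partial D_G}$ times a constant depending only on $\Phi$ and its derivative on $\overline{\bD}$. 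The problem then reduces to proving, on the disk with pole at $a\in\bD$, the inequality
\[
\|\tilde f_h\|_{\partial\bD}\le C(\log n_G+1)\|\tilde f\|_{\partial\bD}.
\]

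The model case $a=0$ is transparent on the Fourier side: on $\partial\bD$ one has $\tilde f(e^{i\theta})=\sum_{k\ge -n_G}c_k e^{ik\theta}$, the principal part is the truncation $\tilde f_r(e^{i\theta})=\sum_{k=-n_G}^{-1}c_k e^{ik\theta}$, and $\tilde f_h$ is the complementary projection onto non-negative frequencies. The operator extracting the finitely many negative frequencies $-n_G\le k\le -1$ is a difference of two partial Fourier sum operators, each having $L^\infty\to L^\infty$ operator norm equal to the classical Lebesgue constant $L_{n_G}=O(\log n_G+1)$; hence $\|\tilde f_r\|_{\partial\bD}\le 2L_{n_G}\|\tilde f\|_{\partial\bD}$, and the triangle inequality delivers the required bound on $\|\tilde f_h\|_{\partial\bD}$. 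For general $a\in\bD$, one passes to the case of a pole at the origin either by composing $\tilde f$ with a Möbius automorphism $\psi\colon\bD\to\bD$ satisfying $\psi(0)=a$ (with bookkeeping that this composition modifies the principal-part decomposition only by a holomorphic additive term, whose sup norm is controlled by $\|\tilde f\|_{\partial\bD}$ and $\dist(a,\partial\bD)$) or by replacing the Fourier basis with a Takenaka--Malmquist-type orthonormal system of rational functions adapted to $a$, for which analogous Lebesgue-constant estimates are available.

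The main obstacle is obtaining the logarithmic rather than linear dependence on $n_G$: the naive Cauchy bound on Laurent coefficients gives only geometric-in-$n_G$ control, and even the sharp Fourier coefficient estimate $|c_k|\le\|\tilde f\|_{\partial\bD}$ summed over $-n_G\le k\le -1$ produces only a factor $n_G$; the gain to $\log n_G$ requires the Dirichlet-kernel cancellation encoded in the Lebesgue constants. A secondary obstacle is the conformal and Möbius bookkeeping — one must verify that neither the Riemann map nor the Möbius normalization (or the choice of adapted basis) injects any hidden $n_G$-dependence into the additive correction between $f_{G,h}$ and the disk-model object $\tilde f_h$. The $C^1$ boundary regularity of $\partial D_G$ and the Kellogg-type control on $\Phi'$ at the boundary are precisely what prevent this.
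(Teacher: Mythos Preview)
The paper does not prove this statement: it is quoted verbatim as an auxiliary result from Gon\v{c}ar--Grigorjan \cite{MR0417417}, with no argument given. So there is no ``paper's own proof'' to compare against, only the cited reference.

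Your core mechanism --- extracting a block of $n_G$ Fourier modes and invoking the Lebesgue constant $L_{n_G}=O(\log n_G)$ --- is exactly the right source of the logarithm, and your treatment of the disk with pole at the origin is correct. The M\"obius step to a general pole $a\in\bD$ also works, though you do not justify it: the holomorphic discrepancy between the principal part before and after composing with the disk automorphism is a \emph{constant} (both pieces are rational with the same unique pole), equal to $-F_r(-1/\bar a)$; since $F_r$ is holomorphic on $\{|w|\ge 1\}$ with $F_r(\infty)=0$, the maximum principle gives $|F_r(-1/\bar a)|\le\|F_r\|_{\partial\bD}$, and no dependence on $\mathrm{dist}(a,\partial\bD)$ enters.

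The genuine gap is the Riemann-map reduction. You assert that the regular part of $f_{G,r}\circ\Phi$ at $a$ has boundary sup norm controlled by $\|f_G\|_{\partial D_G}$ times a constant depending only on $\Phi$. This is not true in general, and already the example $f_G(z)=(z-p)^{-n_G}$ shows the discrepancy is of order $\log n_G$, not $O(1)$. Worse, the only natural bound available for that regular part is the disk estimate applied to $f_{G,r}\circ\Phi$, which yields $C(\log n_G+1)\,\|f_{G,r}\|_{\partial D_G}$ --- and $\|f_{G,r}\|_{\partial D_G}$ is precisely the unknown quantity. Substituting $\|f_{G,r}\|\le\|f_G\|+\|f_{G,h}\|$ gives an inequality of the form $\|f_{G,h}\|\le C\log n_G\,\|f_G\|+C\log n_G\,\|f_{G,h}\|$, which is useless once $n_G$ is large. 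The conformal bookkeeping does inject hidden $n_G$-dependence, contrary to your final paragraph. One clean way to avoid the circularity is to skip the Riemann map entirely and run the partial-sum argument directly on $\partial D_G$ using Faber polynomials (whose Lebesgue constants on $C^{1,\alpha}$ curves are likewise $O(\log n)$); this is closer to what the cited source does.
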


In the main result of this paper we are interested in asymptotics as $n\rightarrow\infty$.
In particular, if $n_G \ge 2$, then $\log n_G +1\le 3 \log\left(n_G\right)$,
so we may write $\log n_G +1 = O\left(\log n_G \right)$.

The second result is a special case of the Bernstein-Walsh estimate,
see \cite{MR1334766}, p. 156, Theorem 5.5.7 a) or \cite{MR1485778},
p. 153.
\begin{thm*}
Let $\tilde{G}\subset\bC_{\infty}$ be a domain, $\infty\in\tilde{G}$
and denote its Green's function by $g_{\tilde{G}}\left(u,\infty\right)$
with pole at infinity. Let $\tilde{f}:\tilde{G}\rightarrow\bC_{\infty}$
be a meromorphic function which has only one pole at infinity and
we denote the order of the pole by $\tilde{n}$. Assume that $\tilde{f}$
can be extended continuously to the boundary $\partial\tilde{G}$
of $\tilde{G}$. Then
\begin{equation}
\left|\tilde{f}\left(u\right)\right|\le\left\Vert \tilde{f}\right\Vert _{\partial\tilde{G}}\exp\left(\tilde{n}\, g_{\tilde{G}}\left(u,\infty\right)\right)\label{est:BernsteinWalsh}
\end{equation}
 where $\left\Vert .\right\Vert _{\partial\tilde{G}}$ denotes the
sup norm over $\partial\tilde{G}$. 
\end{thm*}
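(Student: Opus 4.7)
The plan is to apply the maximum principle for subharmonic functions to the auxiliary function
\[
h(u) := \log\left|\tilde{f}(u)\right| - \tilde{n}\, g_{\tilde{G}}(u,\infty)
\]
on $\tilde{G}$. First I would verify that $h$ is subharmonic on $\tilde{G}\setminus\{\infty\}$: the term $\log|\tilde{f}|$ is subharmonic there (it takes the value $-\infty$ at zeros of $\tilde{f}$, which is harmless for subharmonicity), while $-\tilde{n}\, g_{\tilde{G}}(\cdot,\infty)$ is harmonic on $\tilde{G}\setminus\{\infty\}$ by definition of the Green's function.

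Next I would show that the singularity of $h$ at $\infty$ is removable (as a subharmonic function). This is where the precise matching of orders is used: since $\tilde{f}$ has a pole at $\infty$ of order exactly $\tilde{n}$, we have $\log|\tilde{f}(u)| = \tilde{n}\log|u| + O(1)$, and from the asymptotic expansion of the Green's function, $g_{\tilde{G}}(u,\infty) = \log|u| + \gamma + o(1)$, where $\gamma$ is the Robin constant of $\partial\tilde{G}$. The two logarithmic singularities cancel, leaving $h$ bounded above near $\infty$, so $h$ extends to an upper semicontinuous subharmonic function on all of $\tilde{G}$.

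Having produced a genuine subharmonic function on $\tilde{G}$ (which is a domain in $\bCi$ containing $\infty$), I would read off the boundary values: on $\partial\tilde{G}$ we have $g_{\tilde{G}}(\cdot,\infty)\equiv 0$ (the boundary is smooth enough that every boundary point is regular for Dirichlet), and $|\tilde{f}|\le\|\tilde{f}\|_{\partial\tilde{G}}$ by continuous extension. Hence $h\le\log\|\tilde{f}\|_{\partial\tilde{G}}$ on $\partial\tilde{G}$. The maximum principle then extends this bound to all of $\tilde{G}$, and exponentiating yields precisely (\ref{est:BernsteinWalsh}).

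The only place requiring actual care is the removable-singularity step at $\infty$; everything else is a direct invocation of the maximum principle. Note that the hypothesis that $\tilde{f}$ has a pole of order exactly $\tilde{n}$ (not larger) is essential: if the order were larger than $\tilde{n}$, then $h\to+\infty$ at $\infty$ and the conclusion fails, whereas if it were smaller, $h\to-\infty$ there and the argument still goes through (indeed yields a stronger estimate). No new constructions are needed beyond what is already in the paper.
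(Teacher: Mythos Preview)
Your argument is correct and is in fact the standard proof of the Bernstein--Walsh estimate. Note, however, that the paper does not prove this statement at all: it is quoted as a known result, with references to Ransford, \emph{Potential theory in the complex plane}, Theorem~5.5.7(a), and to Saff--Totik. The subharmonic maximum-principle argument you outline is essentially what one finds in those references.

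One small technical remark: the theorem as stated does not assume smoothness of $\partial\tilde{G}$, so your appeal to ``the boundary is smooth enough that every boundary point is regular'' is not justified by the hypotheses alone. In full generality one observes that $h$ is bounded above on $\tilde{G}$ (since $\tilde{f}$ is continuous on the closure and $g_{\tilde{G}}\ge 0$, while near $\infty$ the cancellation you describe gives boundedness), and then invokes the generalized maximum principle for subharmonic functions, which requires $\limsup h\le \log\|\tilde{f}\|_{\partial\tilde{G}}$ only at nearly every boundary point; the possibly irregular points form a polar set and may be ignored. In the paper's applications the domains $G_2$ have analytic boundary, so every boundary point is indeed regular and your version goes through without this refinement.
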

The third result is a special case of a general construction of fast
decreasing polynomials by Totik, see \cite{MR2574887}, Corollary
4.2 and Theorem 4.1 too on p. 2065.
\begin{thm*}
Let $\tilde{K}\subset\bC$ be a compact set, $\tilde{u}\in\partial\tilde{K}$
be a boundary point. Assume that $\tilde{K}$ satisfies the touching
outer-disk-condition, that is, there exists a closed disk (with positive
radius) such that its intersection with $\tilde{K}$ is $\left\{ \tilde{u}\right\} $.
Then there exist $C_{2},C_{3}>0$ such that for all $\tilde{n}$ there
exists a polynomial $\tilde{Q}$ with the following properties: $\deg\left(\tilde{Q}\right)\le\tilde{n}^{109/110}$,
$\tilde{Q}\left(\tilde{u}\right)=1$, $\left\Vert \tilde{Q}\right\Vert _{\tilde{K}}\le1$
and if $u\in\tilde{K}$, $\left|u-\tilde{u}\right|\ge\tilde{n}^{-9/10}$,
then $\left|\tilde{Q}\left(u\right)\right|\le C_{2}\exp\left(-C_{3}\tilde{n}^{1/110}\right)$.
\end{thm*}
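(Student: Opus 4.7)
The plan is to construct $\tilde{Q}$ as a polynomial approximation to a decaying exponential $e^{-Mh}$, where $h$ is an analytic function tailored to the geometry at $\tilde{u}$, following Totik's general strategy for fast decreasing polynomials at smooth boundary points. The outer touching disk is the essential geometric input: it forces a parabolic (rather than cuspidal) tangency, which in turn gives a linear rather than square-root decay rate in the local estimates for $\mathrm{Re}\,h$.

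\textbf{Normalization.} After translating so that $\tilde{u}=0$ and rotating, assume the touching disk is $D=\{|z+iR|\le R\}$, so that $\tilde{K}\subset\{\mathrm{Im}\,z\ge-|z|^{2}/(2R)\}$, i.e.\ $\tilde{K}$ lies locally above a parabola tangent to $\bR$ at $0$. I would then build an analytic function $h$ on $\bCi\setminus\tilde{K}$ with $h(\tilde{u})=0$ and $\mathrm{Re}\,h(z)\ge c\min(|z|,1)$ on $\tilde{K}$. A natural construction is to take the Riemann map of $\bCi\setminus\tilde{K}$ onto $\bD^{*}$ and postcompose with a Möbius-type map that ``unfolds'' the parabolic tangency at $\tilde{u}$ into a right-angled corner; the linear lower bound on $\mathrm{Re}\,h$ (rather than the $\sqrt{|z|}$ one would get without the outer disk) is exactly what the touching-disk hypothesis provides.

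\textbf{Polynomial approximation.} The function $e^{-Mh(z)}$ equals $1$ at $\tilde{u}$, has modulus at most $1$ on $\tilde{K}$, and satisfies $|e^{-Mh(z)}|\le e^{-cM|z|}$ on $\tilde{K}$ near $\tilde{u}$. I would approximate $e^{-Mh}$ on $\tilde{K}$ by a polynomial $p_{m}$ of degree $m$ as follows: use the Bernstein--Walsh estimate~\eqref{est:BernsteinWalsh} to control the error on level curves of $g_{\bCi\setminus\tilde{K}}(\cdot,\infty)$, and use the Gonchar--Grigorjan estimate~\eqref{est:GoncharGrigorjan} to peel off the holomorphic part of the singular piece at infinity so that only a polynomial remains. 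Setting $\tilde{Q}:=p_{m}/p_{m}(\tilde{u})$ and rescaling by a factor $1+o(1)$ to enforce $\|\tilde{Q}\|_{\tilde{K}}\le1$ then yields a polynomial with $\tilde{Q}(\tilde{u})=1$, decaying like $Ce^{-c'M|z-\tilde{u}|}$ near $\tilde{u}$, and uniformly small on the rest of $\tilde{K}$ by another application of Bernstein--Walsh.

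\textbf{Parameter balance.} To match the exponents in the statement I would set $M=\tilde{n}^{1/110+9/10}=\tilde{n}^{100/110}$, which gives decay $e^{-C_{3}\tilde{n}^{1/110}}$ at $|z-\tilde{u}|=\tilde{n}^{-9/10}$, and then take $m=\tilde{n}^{109/110}$, leaving a margin of $\tilde{n}^{9/110}$ for the polynomial-approximation overhead to be negligible. The hard part will be the approximation step: one must show that $e^{-Mh}$ can be approximated on $\tilde{K}$ to uniform error $o(1)$ by a polynomial of degree only marginally larger than the Bernstein--Walsh exponent $M$. This is where the touching-disk smoothness enters decisively, since it allows $h$ to extend analytically across $\partial\tilde{K}$ in a neighborhood of $\tilde{u}$; consequently Bernstein--Walsh applied on a slightly enlarged level set of $g_{\bCi\setminus\tilde{K}}(\cdot,\infty)$ yields exponentially small error at a cost of only $O(\log M)$ extra degree through~\eqref{est:GoncharGrigorjan}. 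Without the outer-disk hypothesis the required degree would blow up and the stated exponents $109/110$, $9/10$, $1/110$ could not be simultaneously attained.
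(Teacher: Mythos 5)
The statement you are proving is not established in the paper at all: it is quoted verbatim from Totik (Theorem 4.1 and Corollary 4.2 of the cited work on Christoffel functions), so the only thing to assess is whether your sketch could stand on its own, and it cannot in its present form. The decisive gap is the unproved claim $\mathrm{Re}\, h(z)\ge c\min(|z-\tilde u|,1)$ on $\tilde K$, which is what your whole parameter balance rests on. If, as you assert in the last paragraph, $h$ extends analytically across $\partial\tilde K$ near $\tilde u$, this linear lower bound is false in general: take $\tilde K=\{|z+1|\le1\}$, touching disk $\{|z-1|\le1\}$, $\tilde u=0$. Any $h$ analytic near $0$ with $h(0)=0$ and $\mathrm{Re}\,h\ge0$ on $\tilde K$ must have $h'(0)=-t$ with $t\ge0$ real, and then at boundary points $z=-1+e^{i\theta}$ with $|z|=\delta$ one gets $\mathrm{Re}\,h(z)=O(\delta^{2})$, not $\gtrsim\delta$. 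The M\"obius ``unfolding'' of the exterior of the touching disk onto a half-plane gives exactly the same quadratic behaviour ($\mathrm{Re}$ of the image is comparable to the distance to the touching circle, which is $\asymp\delta^{2}$ where $\tilde K$ hugs the circle); to turn the tangency into a genuine corner you must take a square root, and then $h$ is \emph{not} analytic at $\tilde u$. So the two halves of your argument are incompatible: with an analytic $h$ the best available bound is quadratic, and then $M\delta^{2}=\tilde n^{100/110-9/5}\to0$, i.e.\ $e^{-Mh}$ has no decay at all at distance $\tilde n^{-9/10}$; with the square-root $h$ the decay is fine (indeed $M\approx\tilde n^{101/220}$ would suffice), but then approximating the non-analytic function $e^{-Mh}$ on $\tilde K$ by a polynomial of degree $\tilde n^{109/110}$ with $o(1)$ error is precisely the hard step, and neither Bernstein--Walsh (which needs analyticity beyond a Green level curve of $\bCi\setminus\tilde K$) nor Gonchar--Grigorjan (which estimates the holomorphic part of a meromorphic function with a pole and produces no polynomial approximants) gives it. Your statement that the touching disk upgrades a square-root rate to a linear one is therefore backwards, and the exponents $109/110$, $9/10$, $1/110$ are never actually verified.

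There are further problems of detail: for a general compact $\tilde K$ (which the statement allows, e.g.\ two disjoint disks) the set $\bCi\setminus\tilde K$ is not simply connected, so ``the Riemann map of $\bCi\setminus\tilde K$ onto $\bD^{*}$'' does not exist; a correct construction must work with the touching disk alone (a M\"obius inversion), not with a Riemann map of the complement, and must then handle the singular behaviour at $\tilde u$ by an explicit multi-scale or kernel construction --- this is the real content of Totik's proof and is absent from your sketch. As it stands, the proposal identifies the right general strategy (approximate $e^{-Mh}$ for a suitable $h$) but both of its load-bearing steps --- the lower bound for $\mathrm{Re}\,h$ and the near-optimal-degree approximation --- are missing or incorrect, so it does not constitute a proof.
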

To apply this third theorem, we introduce several notations.

We need $\psi\left(v\right):=\frac{1-\overline{a_{2}}v}{v-a_{2}}=w$
and its inverse $\psi^{-1}\left(w\right)=\frac{1+a_{2}w}{w+\overline{a_{2}}}$.
Note that $\psi\left(a_{2}\right)=\infty$, $\psi\left(1\right)=\frac{1-\overline{a_{2}}}{1-a_{2}}$
and let $b_{1}:=\frac{1-\overline{a_{2}}}{1-a_{2}}$. Obviously, $\psi\left(\partial\bD\right)=\partial\bD$.

Let $\Gamma_{1}=\left\{ w:\left|w\right|=1+\delta_{1}\right\} $ and
$\delta_{1}>0$ is chosen so that $\Gamma_{1}\subset\psi\left(D_{1}\right)$.
This $\delta_{1}$ depends on $G_{2}$ only.

Let $D_{3}:=\left\{ w:\left|w-2b_{1}\right|<1\right\} $, this disk
touches the unit disk at $b_{1}$. Fix $\delta_{2,3}^{\left(0\right)}>0$,
$ $$\delta_{2,3}^{\left(0\right)}<1$, such that $\left\{ w:\left|w\right|\le1+\delta_{2,3}^{\left(0\right)}\right\} \subset\psi\left(D_{1}\right)$.
Then for every $\delta_{2,3}\in\left(0,\delta_{2,3}^{\left(0\right)}\right]$,
$\left\{ w:\ \left|w\right|=1+\delta_{2,3}\right\} \cap\partial D_{3}$
consists of exactly two points, $w_{1}^{*}=w_{1}^{*}\left(\delta_{2,3}\right)$
and $w_{2}^{*}=w_{2}^{*}\left(\delta_{2,3}\right)$. It is easy to
see that the length of the two arcs of $\left\{ w:\left|w\right|=1+\delta_{2,3}^{\left(0\right)}\right\} $
lying in between $w_{1}^{*}$ and $w_{2}^{*}$ are different, therefore,
by reindexing them, we can assume that the shorter arc is going from
$w_{1}^{*}$ to $w_{2}^{*}$  counterclockwise. Elementary geometric
considerations show that for all $w$, $1\le\left|w\right|\le1+\delta_{2,3}$
with $\arg w\in\left\{ \arg w_{j}^{*}\left(\delta_{2,3}\right):\ j=1,2\right\} $,
we have (since $\delta_{2,3}<1$) 
\begin{equation}
\frac{1}{2}\sqrt{\delta_{2,3}}\le\left|w-b_{1}\right|\le2\sqrt{\delta_{2,3}}.\label{eq:w_star_dist}
\end{equation}

Let 
\[
K_{w}^{*}:=\left\{ w:\ \left|w\right|\le1+\delta_{2,3}^{\left(0\right)}\right\} \setminus D_{3}.
\]
Obviously, this $K_{w}^{*}$ is a compact set and satisfies the touching-outer-disk
condition at $b_{1}=\frac{1-\overline{a_{2}}}{1-a_{2}}$ of Totik's
theorem. See figure \ref{fig:gamma_two} later.

Consider 
\[
K_{u}^{*}:=\Phi_{2}\circ\psi^{-1}\left[K_{w}^{*}\cap\bD^{*}\right]\cup\Phi_{1}\circ\psi^{-1}\left[K_{w}^{*}\cap\bD^{*}\right]\cup G_{1}.
\]
This is a compact set and also satisfies the touching-outer-disk condition
at $u_{0}=\Phi_{2}\left(1\right)$ of Totik's theorem. Obviously,
$\partial G_{2}\subset K_{u}^{*}$, $G_{1}\subset K_{u}^{*}$ , $u_{0}\in K_{u}^{*}$
and if $w\in K_{w}^{*}$, then $\Phi_{1}\circ\psi^{-1}\left(w\right)\in K_{u}^{*}$
and $\Phi_{2}\circ\psi^{-1}\left(w\right)\in K_{u}^{*}$ too. Now
applying Totik's theorem, there exists a fast decreasing polynomial
for $K_{u}^{*}$ at $u_{0}$ of degree at most $n_{1}$ which we denote
by $Q=Q\left(n_{1};u\right)$. More precisely, $Q$ has the following
properties: $Q\left(u_{0}\right)=1$, $\left|Q\left(u\right)\right|\le1$
on $u\in K_{u}^{*}$, $\deg Q\le n_{1}^{109/110}\le n_{1}$ and if
$\left|u-u_{0}\right|>n_{1}^{-9/10}$, $u\in K_{u}^{*}$, then 
\begin{equation}
\left|Q\left(u\right)\right|\le C_{2}\exp\left(-C_{3}n_{1}^{1/110}\right).\label{eq:Q_fast_decreases}
\end{equation}

Let $n_{1}:=\left\lfloor \sqrt{n}\right\rfloor $, $n_{2}:=\left\lfloor n^{3/4}\right\rfloor $,
$\delta_{2,1}:=1/n$ and $\delta_{2,3}:=n^{-2/3}$.

\subsection{Proof}

In this subsection, we let $f\left(u\right):=P_{n}\left(F\left(u\right)\right)$
where $P_{n}$ is a fixed polynomial of degree $n$ 
and $F$ is the open-up rational function (see Proposition \ref{prop:open-up})
for $K$ (from Theorem \ref{thm:main}).

Actually, we use only the following facts. $f$ is a rational function
such that it has one pole in $G_{1}$ and one in $G_{2}$. We know
that the poles of $f$ are $\infty=F_{2}^{-1}\left[\infty\right]$
and $F_{1}^{-1}\left[\infty\right]$, and the order of the pole in
$G_{1}$ is $n$. 

It is easy to decompose $f$ into sum of rational functions, that
is,
\[
f=f_{1}+f_{2}
\]
where $f_{1}$ is a rational function with pole in $G_{1}$, $f_{1}\left(\infty\right)=0$
and $f_{2}$ is a polynomial (rational function with pole at $\infty$).
This decomposition is unique. We use the Gonchar-Grigorjan estimate
(\ref{est:GoncharGrigorjan}) for $f_{2}$ on $G_{1}^{+}$, so we
have
\begin{equation}
\left\Vert f_{2}\right\Vert _{\partial G_{2}}
\le 
C_{1}\left(G_{1}^{+}\right)\left(\log n + 1 \right)\left\Vert f\right\Vert _{\partial G_{2}}.\label{eq:f_two_norm_est}
\end{equation}
Obviously, we have 
\begin{equation}
\left\Vert f_{1}\right\Vert _{\partial G_{2}}
\le
\left(1+C_{1}\left(G_{1}^{+}\right)\left(\log n + 1 \right)\right)\left\Vert f\right\Vert _{\partial G_{2}}.
\label{eq:f_one_norm_est}
\end{equation}

Consider 
\[
\varphi_{1}\left(v\right):=f_{1}\left(\Phi_{1}\left(v\right)\right).
\]
This is a meromorphic function in $D_{1}$. We may assume that $\varphi_{1}$
has only one pole in $D_{1}$ otherwise we can decrease $r_{2}>1$ so that
the pole in $G_{2}$ is not in $\Phi_{1}\left(D_{1}\right)=G_{1}^{+}$. 
We know that 
\begin{equation}
\left\Vert \varphi_{1}\right\Vert _{\partial\bD}=\left\Vert f_{1}\right\Vert _{\partial G_{2}}\label{eq:phi_one_norm}
\end{equation}
 and $\left|\varphi_{1}'\left(1\right)\right|=\left|f_{1}'\left(u_{0}\right)\right|$. 

We decompose ``the essential part of'' $\varphi_{1}$ as follows
\begin{equation}
Q\circ\Phi_{1}\cdot\varphi_{1}=\varphi_{1r}+\varphi_{1e}\label{eq:phi_decomp}
\end{equation}
where $\varphi_{1r}$ is a rational function, $\varphi_{1r}\left(\infty\right)=0$
and $\varphi_{1e}$ is holomorphic in $\bD$. 
We use the Gonchar-Grigorjan
estimate (\ref{est:GoncharGrigorjan}) again for $\varphi_{1}$ on
$\bD$, this way the following sup norm estimate holds
\begin{equation}
\left\Vert \varphi_{1e}\right\Vert _{\partial\bD}
\le
 C_{1}\left(\bD\right)\left(\log n + 1 \right)\left\Vert Q\circ\Phi_{1}\cdot\varphi_{1}\right\Vert _{\partial\bD}
\le
 C_{1}\left(\bD\right)\left(\log n + 1 \right)\left\Vert \varphi_{1}\right\Vert _{\partial\bD}
\label{eq:phi_one_e_norm_est}
\end{equation}
where $C_{1}\left(\bD\right)$ is a constant independent of $\varphi_{1}$.

As a remark, let us note that we may write $\log n+1 \le O\left(\log n\right)$ for simplicity since 
we are interested in asymptotics as $n\rightarrow \infty$ in the main theorem.
Otherwise, if $n=0$ or $n=1$, then $P_n$ is a constant or linear polynomial and 
the error term $o(1)$ in the main theorem (Theorem \ref{thm:main}) can be
sufficiently large (depending on $K$ and $z_0$) for these two particular values of $n$.
In this manner, we write $(\log n+1)$ in general, but we simplify it to $O(\log n)$ frequently.

Furthermore, we can estimate $\varphi_{1e}\left(v\right)$ on $v\in D_{1}\setminus\bD$
as follows
\begin{equation}
\left|\varphi_{1e}\left(v\right)\right|=\left|\left(Q\cdot f_{1}\right)\circ\Phi_{1}\left(v\right)-\varphi_{1r}\left(v\right)\right|\le\left|\left(Q\cdot f_{1}\right)\circ\Phi_{1}\left(v\right)\right|+\left|\varphi_{1r}\left(v\right)\right|.\label{est:phi_one_e_v}
\end{equation}

We also need to estimate $Q$ outside $\bD$ (and $K_{w}^{*}$) as
follows. Using $\deg Q\le n_{1}^{109/110}\le n_{1}$ and Bernstein-Walsh
estimate (\ref{est:BernsteinWalsh}), we can write for $v\in D_{1}\setminus\bD$
\[
\left|Q\left(\Phi_{1}\left(v\right)\right)\right|\le1\cdot\exp\left(n_{1}g_{G_{2}}\left(\Phi_{1}\left(v\right),\infty\right)\right).
\]
Since the set $\Phi_{1}\left(D_{1}\setminus\bD\right)$ is bounded,
\[
C_{6}:=\sup\left\{ g_{G_{2}}\left(\Phi_{1}\left(v\right),\infty\right):\ v\in D_{1}\setminus\bD\right\} <\infty.
\]
Therefore, for all $v\in D_{1}\setminus\bD$, 
\[
\left|\left(Q\cdot f_{1}\right)\circ\Phi_{1}\left(v\right)\right|\le e^{C_{6}n_{1}}\left\Vert f_{1}\right\Vert _{\partial G_{2}}.
\]

This way we can continue (\ref{est:phi_one_e_v}) and we use $u=\Phi_{1}\left(v\right)$
here and that $\varphi_{1r}$ is a rational function with no poles
outside $\bD$ and the maximum principle for $\varphi_{1r}$ 
\[
\le e^{C_{6}n_{1}}\left|f_{1}\left(u\right)\right|+\left\Vert \varphi_{1r}\right\Vert _{\partial\bD}
\le 
e^{C_{6}n_{1}}\left\Vert f_{1}\right\Vert _{\partial G_{2}}+\left\Vert \varphi_{1}\right\Vert _{\partial\bD}+\left\Vert \varphi_{1e}\right\Vert _{\partial\bD}
\]
and here we used that $f_{1}$ has no pole in $G_{2}$ and the maximum
principle. We can estimate these three sup norms with the help of
(\ref{eq:f_one_norm_est}) and (\ref{eq:phi_one_norm}), (\ref{eq:f_one_norm_est})
and (\ref{eq:phi_one_e_norm_est}), (\ref{eq:phi_one_norm}), (\ref{eq:f_one_norm_est}).
Hence we have for $v\in D_{1}\setminus\bD$ 
\begin{multline}
\left|\varphi_{1e}\left(v\right)\right|
\le
\left(e^{C_{6}n_{1}}+1+C_{1}\left(\bD\right)\left(\log n + 1 \right)\right)
\left(1+C_{1}\left(G_{1}^{+}\right)\left(\log n + 1 \right)\right)\left\Vert f\right\Vert _{\partial G_{2}}
\\
=O\left(\log\left(n\right)e^{C_{6}n_{1}}\right)\left\Vert f\right\Vert _{\partial G_{2}}.
\label{eq:phi_one_e_v_est}
\end{multline}

Approximate and interpolate $\varphi_{1e}$ as follows with rational
function which has only one pole, namely at $a_{2}=\Phi_{2}^{-1}\left[\infty\right]$.
Consider $\varphi_{1e}\circ\psi^{-1}\left(w\right)$ on $\psi\left(D_{1}\right)$.
Using the properties of $\psi$, we have 
\[
\left\Vert \varphi_{1e}\right\Vert _{\partial\bD}=\left\Vert \varphi_{1e}\circ\psi^{-1}\right\Vert _{\partial\bD}
\]
and $\varphi_{1e}\circ\psi^{-1}$ is a holomorphic function in $\psi\left(D_{1}\right)$.
We interpolate and use integral estimates for the error, see e.g.
\cite{MR1334766}, p. 170, proof of Theorem 6.3.1 or \cite{MR0229803},
p. 11. Therefore, let 
\[
q_{N}\left(w\right):=w^{N}\left(w-b_{1}\right)^{2}
\]
where $N=n+\left\lfloor \sqrt{n}\right\rfloor +\left\lfloor n^{3/4}\right\rfloor =n\left(1+o\left(1\right)\right)$.
We define the approximating polynomial 
\[
p_{1,N}\left(w\right):=\frac{1}{2\pi i}\int_{\Gamma_{1}}\frac{\varphi_{1e}\circ\psi^{-1}\left(\omega\right)}{q_{N}\left(\omega\right)}\frac{q_{N}\left(w\right)-q_{N}\left(\omega\right)}{w-\omega}d\omega.
\]
It is well known that $p_{1,N}$ does not depend on $\Gamma_{1}$.
Since $b_{1}$ is a double pole of $q_{N}$, therefore $p_{1,N}$
and $p_{1,N}'$ coincide there with $\varphi_{1e}\circ\psi^{-1}$
and $\left(\varphi_{1e}\circ\psi^{-1}\right)'$ respectively.

The error of the approximating polynomial $p_{1,N}$ to $\varphi_{1e}\circ\psi^{-1}$
is 
\begin{multline}
\varphi_{1e}\circ\psi^{-1}\left(w\right)-p_{1,N}\left(w\right)=\frac{1}{2\pi i}\int_{\Gamma_{1}}\frac{\varphi_{1e}\circ\psi^{-1}\left(\omega\right)}{\omega-w}\frac{q_{N}\left(w\right)}{q_{N}\left(\omega\right)}d\omega\\
=\frac{1}{2\pi i}\int_{\Gamma_{1}}\frac{1}{\omega-w}\ q_{N}\left(w\right)\ \frac{\varphi_{1e}\circ\psi^{-1}\left(\omega\right)}{q_{N}\left(\omega\right)}\; d\omega,\label{eq:p_N_one_error_est}
\end{multline}
here $w\in\bD$ can be arbitrary. It is easy to see that for $w\in\bD$,
$\left|q_{N}\left(w\right)\right|\le4$ and 
\[
\frac{1}{2\pi}\int_{\Gamma_{1}}\left|\frac{1}{\omega-w}\right|\left|d\omega\right|\le\frac{1+\delta_{1}}{\delta_{1}}.
\]
Therefore, using (\ref{eq:phi_one_e_v_est}), we can estimate the
error (of approximation of $p_{1,N}$ to $\varphi_{1e}\circ\psi^{-1}$
) as follows
\begin{multline*}
\left|\varphi_{1e}\circ\psi^{-1}\left(w\right)-p_{1,N}\left(w\right)\right|\le\frac{4\left(1+\delta_{1}\right)}{\delta_{1}}O\left(\log\left(n\right)e^{C_{6}n_{1}}\right)\left\Vert f\right\Vert _{\partial G_{2}}\frac{1}{\delta_{1}^{2}\left(1+\delta_{1}\right)^{N}}\\
=\frac{4\left(1+\delta_{1}\right)}{\delta_{1}^{3}}\frac{O\left(\log\left(n\right)e^{C_{6}n_{1}}\right)}{\left(1+\delta_{1}\right)^{N}}\left\Vert f\right\Vert _{\partial G_{2}}
\end{multline*}
which tends to $0$ as $n\rightarrow\infty$, because $n_{1}=\left\lfloor \sqrt{n}\right\rfloor $
and 
\[
\frac{e^{C_{6}n_{1}}}{\left(1+\delta_{1}\right)^{N}}=\exp\left(C_{6}\sqrt{n}-\log\left(1+\delta_{1}\right)n\left(1+o\left(1\right)\right)\right)\rightarrow0.
\]
 Considering $p_{1,N}\circ\psi$, it is a rational function with pole
at $a_{2}$ only, the order of its pole at $a_{2}$ is at most $N$
and we know that 
\begin{equation}
\left\Vert \varphi_{1e}-p_{1,N}\circ\psi\right\Vert _{\partial\bD}=o\left(1\right)\left\Vert f\right\Vert _{\partial G_{2}}\label{eq:phi_one_e_sup_err_est}
\end{equation}
where $o\left(1\right)$ is independent of $P_{n}$ and $f$ and depends
only on $G_{2}$ and tends to $0$ as $n\rightarrow\infty$, furthermore
\begin{equation}
\varphi_{1e}'\left(1\right)=\left(p_{1,N}\circ\psi\right)'\left(1\right).\label{eq:phi_one_e_deriv_at_one}
\end{equation}

Now we interpolate and approximate $f_{2}\circ\Phi_{1}$. As earlier,
we do not need the full information of this function, it is enough
to deal with $f_{2}\circ\Phi_{1}$ locally around $1$ and preserve
the sup norm. Therefore we ``chop off'' ``the unnecessary parts
of $f_{2}\circ\Phi_{1}$'' with the fast decreasing polynomial $Q$.

We have the following description about the growth of Green's function. 
\begin{lem}
\label{lem:green_est}There exists $C_{4}>0$ depending on $\delta_{2,3}^{\left(0\right)}$,
that is, depending on $G_{2}$ only and is independent of $P_{n},n$
and $f$ such that for all $1\le\left|w\right|\le1+\delta_{2,3}^{\left(0\right)}$
we have
\[
\left|\frac{\left(\psi\circ\Phi_{2}^{-1}\circ\Phi_{1}\circ\psi^{-1}\right)'\left(w\right)}{\psi\circ\Phi_{2}^{-1}\circ\Phi_{1}\circ\psi^{-1}\left(w\right)}\right|\le C_{4}.
\]
 and 
\begin{equation}
g_{G_{2}}\left(\Phi_{1}\circ\psi^{-1}\left(w\right),\infty\right)\le C_{4}\left(\left|w\right|-1\right).\label{eq:green_est_away}
\end{equation}
 Furthermore, there exists $C_{5}>0$ which depends on $G_{2}$ and
independent of $P_{n},n$ and $f$ such that for all $1\le\left|\zeta\right|\le1+\delta_{2,3}^{\left(0\right)}$
we have 
\[
\left|\frac{\left(\psi\circ\Phi_{2}^{-1}\circ\Phi_{1}\circ\psi^{-1}\right)'\left(\zeta\right)}{\psi\circ\Phi_{2}^{-1}\circ\Phi_{1}\circ\psi^{-1}\left(\zeta\right)}\right|\le1+C_{5}\left|\zeta-b_{1}\right|
\]
and 
\begin{equation}
g_{G_{2}}\left(\Phi_{1}\circ\psi^{-1}\left(\zeta\right),\infty\right)\le\left(\left|\zeta\right|-1\right)\left(1+C_{5}\left|\zeta-b_{1}\right|\right).\label{eq:green_est_close}
\end{equation}
\end{lem}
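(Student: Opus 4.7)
I would consolidate all four inequalities into statements about the single analytic self-map
\[
H(w) := \psi \circ \Phi_{2}^{-1} \circ \Phi_{1} \circ \psi^{-1}(w)
\]
of a neighborhood of $\partial\bD$ around $b_{1}$. After shrinking $\delta_{2,3}^{(0)}$ if necessary, the analytic extensions from Proposition \ref{prop:green_deriv_g_one_g_two} guarantee that $\Phi_{1}\circ\psi^{-1}$ maps the closed annulus $\{1\le|w|\le 1+\delta_{2,3}^{(0)}\}$ into $G_{1}^{+}\cap G_{2}^{+}$, so $H$ is holomorphic there. Tracing $b_{1}\mapsto 1\mapsto u_{0}\mapsto 1\mapsto b_{1}$ gives $H(b_{1})=b_{1}$, and since $\psi$, $\Phi_{1}$, $\Phi_{2}$ send $\partial\bD\to\partial\bD$, $\partial\bD\to\partial G_{1}=\partial G_{2}$, and $\partial\bD\to\partial G_{2}$ respectively, one has $|H(w)|=1$ for $w\in\partial\bD$ near $b_{1}$. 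The chain rule together with $|\Phi_{1}'(1)|=|\Phi_{2}'(1)|=1$ also yields $|H'(b_{1})|=1$.

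\textbf{Reduction of the Green's function bounds to $|H|$, then to $|H'/H|$.} Conformal invariance of Green's function and the explicit formula $g_{\bD^{*}}(v,a_{2})=\log|\psi(v)|$ (which was already used in Proposition \ref{prop:green_deriv_unitdisk}) give
\[
g_{G_{2}}(\Phi_{2}(v),\infty)=\log|\psi(v)|,
\]
so substituting $v=\Phi_{2}^{-1}\circ\Phi_{1}\circ\psi^{-1}(w)$ produces
\[
g_{G_{2}}\bigl(\Phi_{1}\circ\psi^{-1}(w),\infty\bigr)=\log|H(w)|.
\]
For $w=re^{i\theta}$ with $r\ge1$ in the annulus, using $\log|H(e^{i\theta})|=0$, I radially integrate:
\[
\log|H(w)|=\int_{1}^{r}\Re\!\left(e^{i\theta}\,\frac{H'(se^{i\theta})}{H(se^{i\theta})}\right)ds\le\int_{1}^{r}\left|\frac{H'}{H}\right|\!(se^{i\theta})\,ds.
\]
Hence each Green's function estimate follows from its companion pointwise bound on $|H'/H|$.

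\textbf{The two bounds on $|H'/H|$.} Since $H(b_{1})=b_{1}\ne 0$, shrinking $\delta_{2,3}^{(0)}$ if needed makes $H$ nonvanishing on the closed annulus, so $|H'/H|$ is continuous there and hence bounded by some $C_{4}$ depending only on $G_{2}$; combined with the radial integral this yields the first logarithmic-derivative inequality and (\ref{eq:green_est_away}). For the refined bound, I expand the analytic function $H'/H$ at $b_{1}$: using $|H'(b_{1})/H(b_{1})|=1$, I obtain
\[
\left|\frac{H'(\zeta)}{H(\zeta)}\right|\le 1+C_{5}|\zeta-b_{1}|
\]
on the annulus, for some $C_{5}$ depending only on $G_{2}$. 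Substituting into the radial integral and using the elementary estimate $|se^{i\theta}-b_{1}|\le(r-1)+|\zeta-b_{1}|\le 2|\zeta-b_{1}|$ (where the last step uses that $|b_{1}|=1$ forces $r-1=|\zeta|-|b_{1}|\le|\zeta-b_{1}|$) delivers (\ref{eq:green_est_close}) with $C_{5}$ replaced by $2C_{5}$.

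\textbf{Main obstacle.} The calculations are routine once $H$ is in hand; the only subtle point is the setup, namely confirming that $\delta_{2,3}^{(0)}$ may be chosen so that $H$ is holomorphic on a full annular neighborhood of $\partial\bD$ near $b_{1}$. This rests on the analytic extensions of $\Phi_{1}$ and $\Phi_{2}$ provided by Proposition \ref{prop:green_deriv_g_one_g_two}, which produce the overlap $G_{1}^{+}\cap G_{2}^{+}$ in a two-sided neighborhood of $\partial G_{2}$ containing $u_{0}$.
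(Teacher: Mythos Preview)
Your proof is correct and follows essentially the same route as the paper: express the Green's function as $\log|H|$ via conformal invariance, bound $|H'/H|$ by compactness (for $C_{4}$) and by $|H'(b_{1})/H(b_{1})|=1$ plus continuity (for $C_{5}$), then integrate along the radial segment from $\zeta/|\zeta|$ to $\zeta$. The only cosmetic difference is that the paper notes directly that $s\mapsto|se^{i\theta}-b_{1}|$ is nondecreasing for $s\ge 1$ (since $|b_{1}|=1$), obtaining $|\eta-b_{1}|\le|\zeta-b_{1}|$ for every point $\eta$ on the radial segment and hence avoiding your harmless extra factor of~$2$.
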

\begin{proof}
For simplicity, let $\zeta^{*}:=\arg\zeta$ where $\arg\zeta=\zeta/\left|\zeta\right|$,
if $\zeta\ne0$ and $\arg0=0$. 

We can express Green's function in the following ways for $u\in G_{2}$,
\[
g_{G_{2}}\left(u,\infty\right)=\log\left|\psi\circ\Phi_{2}^{-1}\left(u\right)\right|
\]
and for $w\in\bD^{*}$
\[
g_{G_{2}}\left(\Phi_{1}\circ\psi^{-1}\left(w\right),\infty\right)=\log\left|\psi\circ\Phi_{2}^{-1}\circ\Phi_{1}\circ\psi^{-1}\left(w\right)\right|.
\]

The first displayed inequality in the Lemma comes from continuity
considerations and the conformal bijection properties. Integrating
this inequality along radial rays, we obtain (\ref{eq:green_est_away}).
If we are close to $1$, then more is true: 
\[
\left|\left(\psi\circ\Phi_{2}^{-1}\circ\Phi_{1}\circ\psi^{-1}\right)'\left(b_{1}\right)\right|=1.
\]

Using continuity, we see that there exists $C_{5}>0$ such that for
all $\zeta$, $1\le\left|\zeta\right|\le1+\delta_{2,3}^{\left(0\right)}$,
we have 
\[
\left|\frac{\left(\psi\circ\Phi_{2}^{-1}\circ\Phi_{1}\circ\psi^{-1}\right)'\left(\zeta\right)}{\psi\circ\Phi_{2}^{-1}\circ\Phi_{1}\circ\psi^{-1}\left(\zeta\right)}\right|\le1+C_{5}\left|\zeta-b_{1}\right|.
\]
In particular, for all $\eta$ from the segment $\left[\zeta^{*},\zeta\right]$,
$\eta\in\left[\zeta^{*},\zeta\right]$,
\[
\left|\frac{\left(\psi\circ\Phi_{2}^{-1}\circ\Phi_{1}\circ\psi^{-1}\right)'\left(\eta\right)}{\psi\circ\Phi_{2}^{-1}\circ\Phi_{1}\circ\psi^{-1}\left(\eta\right)}\right|\le1+C_{5}\left|\eta-b_{1}\right|
\]
and $\left|\eta-b_{1}\right|\le\left|\zeta-b_{1}\right|$. Therefore,
integrating with respect to $\eta$ along $\left[\zeta^{*},\zeta\right]$,
we obtain
\begin{multline*}
g_{G_{2}}\left(\Phi_{1}\circ\psi^{-1}\left(\zeta\right),\infty\right)=\Re\int_{\zeta^{*}}^{\zeta}\frac{\left(\psi\circ\Phi_{2}^{-1}\circ\Phi_{1}\circ\psi^{-1}\right)'\left(\eta\right)}{\psi\circ\Phi_{2}^{-1}\circ\Phi_{1}\circ\psi^{-1}\left(\eta\right)}d\eta\\
\le\int_{\zeta^{*}}^{\zeta}\left|\frac{\left(\psi\circ\Phi_{2}^{-1}\circ\Phi_{1}\circ\psi^{-1}\right)'\left(\eta\right)}{\psi\circ\Phi_{2}^{-1}\circ\Phi_{1}\circ\psi^{-1}\left(\eta\right)}\right|\left|d\eta\right|\le\int_{\zeta^{*}}^{\zeta}1+C_{5}\left|\zeta-b_{1}\right|\left|d\eta\right|\\
=\left(\left|\zeta\right|-1\right)\left(1+C_{5}\left|\zeta-b_{1}\right|\right).
\end{multline*}

\end{proof}
Now we give the approximating polynomial as follows
\[
p_{2,N}\left(w\right):=\frac{1}{2\pi i}\int_{\Gamma}\frac{\left(Q\cdot f_{2}\right)\circ\Phi_{1}\circ\psi^{-1}\left(\omega\right)}{q_{N}\left(\omega\right)}\frac{q_{N}\left(w\right)-q_{N}\left(\omega\right)}{w-\omega}d\omega
\]
where $\Gamma$ can be arbitrary with $\bD\subset\inter\Gamma$ and
$\Gamma\subset\psi\left(D_{1}\right)$. We remark that we use the
same interpolating points, but we need a different $\Gamma$ for the
error estimate.

Now we construct $\Gamma=\Gamma_{2}$ for the estimate and investigate
the error. We use $\delta_{2,1}=1/n$, $\delta_{2,3}=n^{-2/3}$ and
$n_{2}=\left\lfloor n^{3/4}\right\rfloor $. We give four Jordan arcs
that will make up $\Gamma_{2}$. Let $\Gamma_{2,3}$ be the (shorter,
circular) arc between $w_{1}^{*}\left(\delta_{2,3}\right)$ and $w_{2}^{*}\left(\delta_{2,3}\right)$,
$\Gamma_{2,1}$ be the longer circular arc between $ $$w_{1}^{*}\left(\delta_{2,3}\right)\frac{1+\delta_{2,1}}{1+\delta_{2,3}}$
and $w_{2}^{*}\left(\delta_{2,3}\right)\frac{1+\delta_{2,1}}{1+\delta_{2,3}}$,
$\Gamma_{2,2}:=\left\{ w:\ 1+\delta_{2,1}\le\left|w\right|\le1+\delta_{2,3},\ \arg w=\arg\left(w_{1}^{*}\left(\delta_{2,3}\right)\right)\right\} $
and similarly $\Gamma_{2,4}:=\left\{ w:\ 1+\delta_{2,1}\le\left|w\right|\le1+\delta_{2,3},\ \arg w=\arg\left(w_{2}^{*}\left(\delta_{2,3}\right)\right)\right\} $
be the two segments connecting $\Gamma_{2,1}$ and $\Gamma_{2,3}$.
Finally let $\Gamma_{2}$ be the union of $\Gamma_{2,1}$, $\Gamma_{2,2}$,
$\Gamma_{2,3}$ and $\Gamma_{2,4}$. 
Figure \ref{fig:gamma_two}
depicts these arcs and $K_{w}^{*}$ defined above. 

\begin{figure}
\begin{centering}
\includegraphics{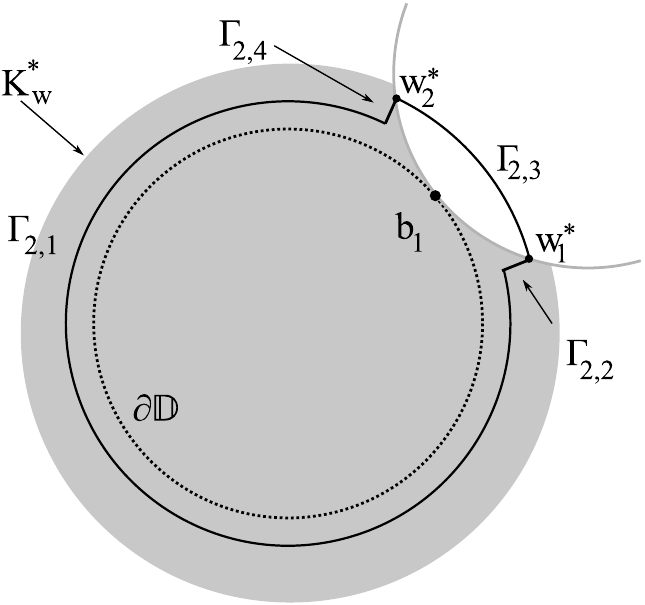}
\par\end{centering}

\protect\caption{$K_{w}^{*}$ and the arcs that make up $\Gamma_{2}$ }
\label{fig:gamma_two}

\end{figure}

We estimate the error of $p_{2,N}$ to $\left(Q\cdot f_{2}\right)\circ\Phi_{1}\circ\psi^{-1}$
on each integral separately:
\begin{multline*}
\left(Q\cdot f_{2}\right)\circ\Phi_{1}\circ\psi^{-1}\left(w\right)-p_{2,N}\left(w\right)=\frac{1}{2\pi i}\int_{\Gamma_{2}}\frac{\left(Q\cdot f_{2}\right)\circ\Phi_{1}\circ\psi^{-1}\left(\omega\right)}{\omega-w}\frac{q_{N}\left(w\right)}{q_{N}\left(\omega\right)}d\omega\\
=\frac{1}{2\pi i}\left(\int_{\Gamma_{2,1}}+\int_{\Gamma_{2,2}}+\int_{\Gamma_{2,3}}+\int_{\Gamma_{2,4}}\right).
\end{multline*}
For the first term, we use  Bernstein-Walsh estimate (\ref{est:BernsteinWalsh})
for the polynomial $f_{2}$ on $G_{2}$ and the fast decreasing polynomial
$Q$ as follows. If $w\in\Gamma_{2,1}$, then with (\ref{eq:green_est_away}),
$g_{G_{2}}\left(\Phi_{1}\circ\psi^{-1}\left(w\right),\infty\right)\le C_{4}\delta_{2,1}=C_{4}/n$,
therefore
\begin{multline*}
\left|f_{2}\left(\Phi_{1}\circ\psi^{-1}\left(w\right)\right)\right|
\le
\left\Vert f_{2}\right\Vert _{\partial G_{2}}\exp\left(n\frac{C_{4}}{n}\right)
\le
\left\Vert f\right\Vert _{\partial G_{2}}C_{1}\left(G_{1}^{+}\right)\left(\log n+1\right)e^{C_{4}}\\
=
O\left(\log\left(n\right)\right)\left\Vert f\right\Vert _{\partial G_{2}}
\end{multline*}
where we used (\ref{eq:f_two_norm_est}). Now we use the fast decreasing
property of $Q$ as follows. We know that $\Gamma_{2,1}\subset K_{w}^{*}$
(if $n\ge1/\delta_{2,3}^{\left(0\right)}$ ) and with the elementary
geometric considerations (\ref{eq:w_star_dist}) we have $\sqrt{\delta_{2,3}}/2\ge n_{1}^{-9/10}$
which is equivalent to $n^{-1/3}/2\ge n^{-9/20}$ (this is true if
$n$ is large). It is also important that $\sup\left\{ \left|\left(\Phi_{1}\circ\psi^{-1}\right)'\left(w\right)\right|:\ w\in\psi\left(D_{1}\right)\right\} <\infty$
and $K_{w}^{*}\subset\psi\left(D_{1}\right)$ therefore the growth
order of the distances is preserved by $\Phi_{1}\circ\psi^{-1}$.
Hence the fast decreasing polynomial $Q$ is small, see (\ref{eq:Q_fast_decreases}),
and we can write 
\[
\left|\left(Q\cdot f_{2}\right)\left(\Phi_{1}\circ\psi^{-1}\left(w\right)\right)\right|\le O\left(\frac{\log\left(n\right)}{\exp\left(C_{3}n^{1/220}\right)}\right)\left\Vert f\right\Vert _{\partial G_{2}}
\]
and integrating along $\Gamma_{2,1}$, we can write for $w\in\bD$
\begin{multline*}
\left|\frac{1}{2\pi i}\int_{\Gamma_{2,1}}\frac{\left(Q\cdot f_{2}\right)\circ\Phi_{1}\circ\psi^{-1}\left(\omega\right)}{\omega-w}\frac{q_{N}\left(w\right)}{q_{N}\left(\omega\right)}d\omega\right|\\
\le\frac{1}{2\pi}\int_{\Gamma_{2,1}}\frac{1}{\left|\omega-w\right|}O\left(\frac{\log\left(n\right)}{\exp\left(C_{3}n^{1/220}\right)}\right)\left\Vert f\right\Vert _{\partial G_{2}}4\frac{1}{\left(1+\delta_{2,1}\right)^{N}\delta_{2,1}^{2}}\left|d\omega\right|\\
\le\frac{2}{\pi}\frac{2\pi\left(1+\delta_{2,1}\right)}{\left(1+\delta_{2,1}\right)^{N}\delta_{2,1}^{3}}O\left(\frac{\log\left(n\right)}{\exp\left(C_{3}n^{1/220}\right)}\right)\left\Vert f\right\Vert _{\partial G_{2}}=O\left(\frac{n^{3}\log\left(n\right)}{\exp\left(C_{3}n^{1/220}\right)}\right)\left\Vert f\right\Vert _{\partial G_{2}}
\end{multline*}
here we used $\delta_{2,1}=1/n$.\medskip{}

We estimate the third term, the integral on $\Gamma_{2,3}$, as follows
for $w\in\bD$
\begin{multline}
\left|\frac{1}{2\pi i}\int_{\Gamma_{2,3}}\frac{\left(Q\cdot f_{2}\right)\circ\Phi_{1}\circ\psi^{-1}\left(\omega\right)}{\omega-w}\frac{q_{N}\left(w\right)}{q_{N}\left(\omega\right)}d\omega\right|\\
\le\frac{1}{2\pi}\int_{\Gamma_{2,3}}4\frac{1}{\left|\omega-w\right|}\left|\left(Q\cdot f_{2}\right)\left(\Phi_{1}\circ\psi^{-1}\left(\omega\right)\right)\right|\frac{1}{\left|q_{N}\left(\omega\right)\right|}\left|d\omega\right|.\label{eq:third_term_main_est}
\end{multline}
Here, $\left|\omega\right|=1+\delta_{2,3}$, $\left|w-\omega\right|\ge\delta_{2,3}$,
$\left|q_{N}\left(\omega\right)\right|\ge\delta_{2,3}^{2}\left(1+\delta_{2,3}\right)^{N}$.
Roughly speaking, $f_{2}$ grows and this time $Q$ grows too (the
bad guys) and only $\left|q_{N}\left(\omega\right)\right|^{-1}$ decreases
(the good guy). We estimate their growth using  Bernstein-Walsh
estimate (\ref{est:BernsteinWalsh}) for $f_{2}$ on $G_{2}$ and
Lemma \ref{lem:green_est} (and  estimate (\ref{eq:f_two_norm_est})
as well) in the following way. 
Here, as earlier, $\omega\in\Gamma_{2,3}$
\begin{multline*}
\left|f_{2}\left(\Phi_{1}\circ\psi^{-1}\left(\omega\right)\right)\right|
\le
\left\Vert f_{2}\right\Vert _{\partial G_{2}}\exp\left(ng_{G_{2}}\left(\Phi_{1}\circ\psi^{-1}\left(\omega\right),\infty\right)\right)
\\
\le 
C_{1}\left(G_{1}^{+}\right)\left(\log n + 1 \right)\left\Vert f\right\Vert _{\partial G_{2}}\exp\left(n\left(\left|\omega\right|-1\right)\left(1+C_{5}\left|\omega-b_{1}\right|\right)\right)
\\
\le 
C_{1}\left(G_{1}^{+}\right)\left(\log n + 1 \right)\left\Vert f\right\Vert _{\partial G_{2}}\exp\left(n\delta_{2,3}+C_{5}n\delta_{2,3}2\sqrt{\delta_{2,3}}\right)
\\
=
C_{1}\left(G_{1}^{+}\right)\left(\log n + 1 \right)\left\Vert f\right\Vert _{\partial G_{2}}\exp\left(n\delta_{2,3}\right)e^{2C_{5}}
\end{multline*}
where in the last two steps we used $\left|\omega-b_{1}\right|\le2\sqrt{\delta_{2,3}}$
from (\ref{eq:w_star_dist}) and $\delta_{2,3}=n^{-2/3}$.

As for $q_{N}$,
\begin{multline*}
\frac{1}{\left|q_{N}\left(\omega\right)\right|}\le\frac{1}{\delta_{2,3}^{2}}\frac{1}{\left(1+\delta_{2,3}\right)^{N}}=\frac{1}{\delta_{2,3}^{2}}\exp\left(-\left(n+n_{1}+n_{2}\right)\log\left(1+\delta_{2,3}\right)\right)\\
\le\frac{1}{\delta_{2,3}^{2}}\exp\left(-n\delta_{2,3}-n_{1}\delta_{2,3}-n_{2}\delta_{2,3}+\left(n+n_{1}+n_{2}\right)\frac{\delta_{2,3}^{2}}{2}\right)\\
\le\frac{1}{\delta_{2,3}^{2}}\exp\left(-n\delta_{2,3}-n_{1}\delta_{2,3}-n_{2}\delta_{2,3}\right)\exp\left(3n\ n^{-4/3}\right)\\
\le\frac{\exp\left(-n\delta_{2,3}-n_{1}\delta_{2,3}-n_{2}\delta_{2,3}\right)}{\delta_{2,3}^{2}}e^{3}
\end{multline*}
where we used $n_{1}=\left\lfloor n^{1/2}\right\rfloor $, $n_{2}=\left\lfloor n^{3/4}\right\rfloor $
and $\delta_{2,3}=n^{-2/3}$.

As for $Q$ (this time it is a bad guy), we use  Bernstein-Walsh
estimate (\ref{est:BernsteinWalsh}) for $Q$ on $G_{1}\cup\partial G_{1}$
and that $G_{1}\cup\partial G_{1}\subset K_{u}^{*}$. Therefore, $\left\Vert Q\right\Vert _{\partial G_{2}}=1$
and we know that $\deg Q\le n_{1}^{109/110}\le n^{109/220}$, hence
\begin{multline*}
\left|Q\left(\Phi_{1}\circ\psi^{-1}\left(\omega\right)\right)\right|\le\left\Vert Q\right\Vert _{\partial G_{2}}\exp\left(n_{1}g_{G_{2}}\left(\Phi_{1}\circ\psi^{-1}\left(\omega\right),\infty\right)\right)\\
\le\exp\left(n_{1}\left(\left|\omega\right|-1\right)\left(1+C_{5}\left|\omega-b_{1}\right|\right)\right)\le\exp\left(n^{109/220}\delta_{2,3}\left(1+C_{5}2\sqrt{\delta_{2,3}}\right)\right)\\
=\exp\left(n^{109/220}\delta_{2,3}+2C_{5}n^{109/220}n^{-1}\right)\le\exp\left(n^{109/220}\delta_{2,3}\right)e^{2C_{5}}.
\end{multline*}
Here we used again (\ref{eq:w_star_dist}) and the definition of $\delta_{2,3}$.

We multiply together all these three last displayed estimates, this
way we can continue our main estimate (\ref{eq:third_term_main_est}).
Note that $\exp\left(n\delta_{2,3}\right)$ cancels, and $\exp\left(-n_{1}\delta_{2,3}\right)$
kills the factor $\exp\left(n^{109/220}\delta_{2,3}\right)$, in more
detail:
\begin{multline*}
\le
\frac{2}{\pi}\int_{\Gamma_{2,3}}\frac{1}{\delta_{2,3}}C_{1}\left(G_{1}^{+}\right)\left(\log n + 1 \right)\left\Vert f\right\Vert _{\partial G_{2}}\exp\left(n\delta_{2,3}\right)e^{2C_{5}}
\\
\cdot\frac{\exp\left(-n\delta_{2,3}-n_{1}\delta_{2,3}-n_{2}\delta_{2,3}\right)}{\delta_{2,3}^{2}}e^{3}\exp\left(n^{109/220}\delta_{2,3}\right)e^{2C_{5}}\left|d\omega\right|
\\
=
\frac{2e^{4C_{5}+3}C_{1}\left(G_{1}^{+}\right)}{\pi}\left\Vert f\right\Vert _{\partial G_{2}}\frac{\log n + 1 }{\delta_{2,3}^{3}}\int_{\Gamma_{2,3}}\left|d\omega\right|
\\
\cdot\exp\left(\left(n^{109/220}-n_{1}\right)\delta_{2,3}\right)\exp\left(-n_{2}\delta_{2,3}\right)
\le
\left\Vert f\right\Vert _{\partial G_{2}}O\left(\frac{n^{2}\log\left(n\right)}{\exp\left(n^{1/12}\right)}\right)
\end{multline*}
where we used several estimates: length of $\Gamma_{2,3}$ is at most
$4\pi$, the definitions of $n_{1},n_{2}$ and $\delta_{2,3}$ and
that $n_{1}>n^{109/220}$, therefore $\exp\left(\left(n^{109/220}-n_{1}\right)\delta_{2,3}\right)\le1$. 

\medskip{}

For $\Gamma_{2,2}$ and $\Gamma_{2,4}$, we apply the same estimate
which we detail for $\Gamma_{2,2}$ only. We again start with the
integral for $w\in\bD$
\begin{multline}
\left|\frac{1}{2\pi i}\int_{\Gamma_{2,2}}\frac{\left(Q\cdot f_{2}\right)\circ\Phi_{1}\circ\psi^{-1}\left(\omega\right)}{w-\omega}\frac{q_{N}\left(w\right)}{q_{N}\left(\omega\right)}d\omega\right|\\
\le\frac{1}{2\pi}\int_{\Gamma_{2,2}}4\frac{1}{\left|w-\omega\right|}\left|\left(Q\cdot f_{2}\right)\left(\Phi_{1}\circ\psi^{-1}\left(\omega\right)\right)\right|\frac{1}{\left|q_{N}\left(\omega\right)\right|}\left|d\omega\right|.\label{eq:second_term_main_est}
\end{multline}
Since $\omega\in\Gamma_{2,2}$, we can rewrite it in the form 
$\omega=\left(1+\delta\right)w_{1}^{*} / \left|w_{1}^{*}\right|$
where $\delta_{2,1}\le\delta\le\delta_{2,3}$ (with $w_{1}^{*}=w_{1}^{*}\left(\delta_{2,3}\right)$
). We use essentially the same steps to estimate $f_{2}$ (the only
one bad guy this time) and $q_{N}$ and $Q$ (this time it is a good
guy). 
In estimating $f_{2}$, the only difference is that $\left|\omega\right|-1=\delta$, so 
\begin{multline*}
\left|f_{2}\left(\Phi_{1}\circ\psi^{-1}\left(\omega\right)\right)\right|
\le
\left\Vert f_{2}\right\Vert _{\partial G_{2}}\exp\left(ng_{G_{2}}\left(\Phi_{1}\circ\psi^{-1}\left(\omega\right),\infty\right)\right)
\\
\le 
C_{1}\left(G_{1}^{+}\right)\left(\log n + 1 \right)\left\Vert f\right\Vert _{\partial G_{2}}\exp\left(n\left(\left|\omega\right|-1\right)\left(1+C_{5}\left|\omega-b_{1}\right|\right)\right)
\\
\le
C_{1}\left(G_{1}^{+}\right)\left(\log n + 1 \right)\left\Vert f\right\Vert _{\partial G_{2}}\exp\left(n\delta+C_{5}n\delta_{2,3}2\sqrt{\delta_{2,3}}\right)
\\
=
C_{1}\left(G_{1}^{+}\right)\left(\log n + 1 \right)\left\Vert f\right\Vert _{\partial G_{2}}\exp\left(n\delta\right)e^{2C_{5}}.
\end{multline*}
Similarly for $q_{N}$, we can write
\begin{multline*}
\frac{1}{\left|q_{N}\left(\omega\right)\right|}\le\frac{1}{\delta_{2,1}^{2}}\frac{1}{\left(1+\delta\right)^{N}}=\frac{1}{\delta_{2,1}^{2}}\exp\left(-\left(n+n_{1}+n_{2}\right)\log\left(1+\delta\right)\right)\\
\le\frac{1}{\delta_{2,1}^{2}}\exp\left(-n\delta-n_{1}\delta-n_{2}\delta+\left(n+n_{1}+n_{2}\right)\frac{\delta_{2,3}^{2}}{2}\right)\\
\le\frac{1}{\delta_{2,1}^{2}}\exp\left(-n\delta-n_{1}\delta-n_{2}\delta\right)\exp\left(3n\ n^{-4/3}\right)\\
\le\frac{\exp\left(-n\delta-n_{1}\delta-n_{2}\delta\right)}{\delta_{2,1}^{2}}e^{3}\le\frac{\exp\left(-n\delta\right)}{\delta_{2,1}^{2}}e^{3}.
\end{multline*}
As for $Q$, we know that $\omega$ is far from $b_{1}$ so $Q$ is
small there. More precisely, following the same argument as for $\Gamma_{2,1}$,
we know that $\sqrt{\delta_{2,3}}/2\ge n_{1}^{-9/10}$, hence (\ref{eq:Q_fast_decreases})
holds for $Q$ at $\omega$, that is, we can write
\[
\left|Q\left(\Phi_{1}\circ\psi^{-1}\left(\omega\right)\right)\right|\le C_{2}\exp\left(-C_{3}n^{1/220}\right).
\]
Putting these all together, we see that $\exp\left(n\delta\right)$
cancels and actually $Q$ make the integrand small. 
So we can continue
the estimate (\ref{eq:second_term_main_est})
\begin{multline*}
\le
\frac{2}{\pi}\int_{\Gamma_{2,2}}\frac{1}{\delta_{2,1}}C_{1}\left(G_{1}^{+}\right)\left(\log n + 1 \right)\left\Vert f\right\Vert _{\partial G_{2}}\exp\left(n\delta\right)e^{2C_{5}}\frac{\exp\left(-n\delta\right)}{\delta_{2,1}^{2}}e^{3}
\\
\cdot C_{2}\exp\left(-C_{3}n^{1/220}\right)\left|d\omega\right|
=
\frac{2e^{2C_{5}+3}C_{2}C_{1}\left(G_{1}^{+}\right)}{\pi}\left\Vert f\right\Vert _{\partial G_{2}}
\int_{\Gamma_{2,2}}\left|d\omega\right|
\\
\cdot\frac{\log n + 1 }{\delta_{2,1}^{3}}\exp\left(-C_{3}n^{1/220}\right)
\le
\left\Vert f\right\Vert _{\partial G_{2}}O\left(\frac{n^{3}\log\left(n\right)}{\exp\left(C_{3}n^{1/220}\right)}\right)
\end{multline*}
where we used that the length of $\Gamma_{2,2}$ is at most $1$ (since
$\delta_{2,3}^{\left(0\right)}<1$) and $\delta_{2,1}=1/n$.

Summarizing these estimates on $\Gamma_{2,1}$, $\Gamma_{2,3}$ and
$\Gamma_{2,2}$ (and also on $\Gamma_{2,4}$), we have uniformly for
$\left|w\right|\le1$, 
\[
\left|p_{2,N}\left(w\right)-\left(Q\cdot f_{2}\right)\circ\Phi_{1}\circ\psi^{-1}\left(w\right)\right|=o\left(1\right)\left\Vert f\right\Vert _{\partial G_{2}}
\]
where $o\left(1\right)$ tends to $0$ as $n\rightarrow\infty$ but
it is independent of $P_{n}$ and $f_{2}$. Obviously, $p_{2,N}\circ\psi$
is a rational function with pole at $v=a_{2}$ only, the order of
the pole at $a_{2}$ (of $p_{2,N}\circ\psi$ ) is $\deg p_{2,N}=N=n+n_{1}+n_{2}=\left(1+o\left(1\right)\right)n$
and using the properties of $w=\psi\left(v\right)$, we uniformly
have for $\left|v\right|\le1$
\[
\left|p_{2,N}\circ\psi\left(v\right)-\left(Q\cdot f_{2}\right)\circ\Phi_{1}\left(v\right)\right|=o\left(1\right)\left\Vert f\right\Vert _{\partial G_{2}},
\]
that is,
\begin{equation}
\left\Vert p_{2,N}\circ\psi-\left(Q\cdot f_{2}\right)\circ\Phi_{1}\right\Vert _{\partial\bD}=o\left(1\right)\left\Vert f\right\Vert _{\partial G_{2}}.\label{eq:sup_norm_err_est_two}
\end{equation}
Since $b_{1}$ is double zero of $q$, $p_{2,N}'\left(b_{1}\right)=\left(\left(Q\cdot f_{2}\right)\circ\Phi_{1}\circ\psi^{-1}\right)'\left(b_{1}\right)$,
and dividing both sides with $\left(\psi^{-1}\right)'\left(b_{1}\right)$,
we obtain 
\begin{equation}
\left(p_{2,N}\circ\psi\right)'\left(1\right)=\left(\left(Q\cdot f_{2}\right)\circ\Phi_{1}\right)'\left(1\right).\label{eq:p_two_n_psi_deriv_at_one}
\end{equation}

\bigskip{}

Consider the ``constructed'' rational function 
\[
h\left(v\right):=\varphi_{1,r}\left(v\right)+p_{1,N}\circ\psi\left(v\right)+p_{2,N}\circ\psi\left(v\right).
\]
This function $h$ has a pole at $a_{1}$ (because of $\varphi_{1,r}$)
and the order of its pole at $a_{1}$ is at most $n$, and $h$ has
a pole at $a_{2}$ (because of $p_{1,N}\circ\psi$ and $p_{2,N}\circ\psi$)
and the order of its pole at $a_{2}$ is at most $N=n\left(1+o\left(1\right)\right)$.
We use the identity 
\[
f\circ\Phi_{1}=\left(Q\cdot f+\left(1-Q\right)\cdot f\right)\circ\Phi_{1}
\]
to calculate the derivatives as follows 
\[
\left(\left(\left(1-Q\right)\cdot f\right)\circ\Phi_{1}\right)'\left(1\right)=\left(\left(1-Q\right)'\cdot f\right)\left(u_{1}\right)\cdot\Phi_{1}'\left(1\right)+\left(\left(1-Q\right)\cdot f'\right)\left(u_{1}\right)\cdot\Phi_{1}'\left(1\right)
\]
where the second term is zero because of the fast decreasing polynomial
($Q\left(u_{1}\right)=1$) and for the first term we can apply Theorem
1.3 from \cite{MR2177185} in the following way ($\left\Vert 1-Q\right\Vert _{\partial G_{2}}\le2$):
\[
\left|\left(1-Q\right)'\left(u_{1}\right)\right|\le\left(1+o\left(1\right)\right)\deg\left(Q\right)2\frac{\partial}{\partial n_{2}\left(u_{1}\right)}g_{G_{2}}\left(u_{1},\infty\right)
\]
where $o\left(1\right)$ depends on $G_{2}$ and $u_{1}$ only and
tends to $0$ as $\deg Q\rightarrow\infty$ (note: $\deg Q\le n^{109/220}\le\sqrt{n}$
). Therefore
\begin{multline}
\left|\left(\left(1-Q\right)'\cdot f\right)\left(u_{1}\right)\cdot\Phi_{1}'\left(1\right)\right|\le\left\Vert f\right\Vert _{\partial G_{2}}\sqrt{n}2\left(1+o\left(1\right)\right)\frac{\partial}{\partial n_{2}\left(u_{1}\right)}g_{G_{2}}\left(u_{1},\infty\right)\\
=\left\Vert f\right\Vert _{\partial G_{2}}O\left(\sqrt{n}\right)\frac{\partial}{\partial n_{2}\left(u_{1}\right)}g_{G_{2}}\left(u_{1},\infty\right)\\
\le o\left(1\right)n\left\Vert f\right\Vert _{\partial G_{2}}\max\left(\frac{\partial}{\partial n_{2}\left(u_{1}\right)}g_{G_{2}}\left(u_{1},\infty\right),\frac{\partial}{\partial n_{1}\left(u_{1}\right)}g_{G_{1}}\left(u_{1},a_{1}\right)\right).
\label{est:one_minus_q_f_prime}
\end{multline}
This way we need to consider
$\left(Q\cdot f\right)\circ\Phi_{1}$ only. The derivatives at $1$
of the original $f$ and $h$ coincide, because of (\ref{eq:phi_decomp}),
(\ref{eq:phi_one_e_deriv_at_one}) and (\ref{eq:p_two_n_psi_deriv_at_one}),
so 
\begin{equation}
h'\left(1\right)=\varphi_{1,r}'\left(1\right)+\left(p_{1,N}\circ\psi\right)'\left(1\right)+\left(p_{2,N}\circ\psi\right)'\left(1\right)=\left(\left(Q\cdot f\right)\circ\Phi_{1}\right)'\left(1\right).\label{eq:h_qf_prime_at_one}
\end{equation}

As for the sup norms, we use (\ref{eq:phi_decomp}), (\ref{eq:phi_one_e_sup_err_est}),
(\ref{eq:sup_norm_err_est_two}), so we write 
\begin{equation}
\left\Vert \left(Q\cdot f\right)\circ\Phi_{1}-h\right\Vert _{\partial\bD}=o\left(1\right)\left\Vert f\right\Vert _{\partial G_{2}}.\label{est:qf_h_sup_norm}
\end{equation}
Now we apply the Borwein-Erdélyi inequality (\ref{ineq:genBorweinErdelyi})
for $h$ as follows:
\begin{equation}
\left|h'\left(1\right)\right|\le\left\Vert h\right\Vert _{\partial\bD}\max\left(\sum_{\alpha}\frac{\partial}{\partial n_{1}\left(1\right)}g_{\bD}\left(1,\alpha\right),\sum_{\alpha}\frac{\partial}{\partial n_{2}\left(1\right)}g_{\bD^{*}}\left(1,\alpha\right)\right)\label{est:main_est_for_h}
\end{equation}
where the summation is taken over all poles in $\bD$ and in $\bD^{*}$
respectively, counting multiplicities. We will continue this estimate
later after simplifying these expressions. Using Propositions \ref{prop:green_deriv_unitdisk}
and \ref{prop:green_deriv_g_one_g_two}, we can write 
\begin{multline*}
\sum_{\alpha}\frac{\partial}{\partial n_{1}\left(1\right)}g_{\bD}\left(1,\alpha\right)\le n\frac{\partial}{\partial n_{1}\left(1\right)}g_{\bD}\left(1,a_{1}\right)=n\frac{\partial}{\partial n_{1}\left(u_{0}\right)}g_{G_{1}}\left(u_{0},F_{1}^{-1}\left[\infty\right]\right)\\
=n\frac{\partial}{\partial n_{2}\left(z_{0}\right)}g_{\bC_{\infty}\setminus K}\left(z_{0},\infty\right)\left|F'\left(u_{0}\right)\right|
\end{multline*}
where in the last step we used Proposition \ref{prop:green_deriv_g_one_g_two}
with $z_{0}=F\left(u_{0}\right)$ and identifying $u_{0}=u_{1}$.
Similarly, we can simplify the second term in the maximum in (\ref{est:main_est_for_h})
\begin{multline*}
\sum_{\alpha}\frac{\partial}{\partial n_{2}\left(1\right)}g_{\bD^{*}}\left(1,\alpha\right)=\deg\left(p_{1,N}+p_{2,N}\right)\frac{\partial}{\partial n_{2}\left(1\right)}g_{\bD^{*}}\left(1,a_{2}\right)\\
\le N\frac{\partial}{\partial n_{2}\left(1\right)}g_{\bD^{*}}\left(1,a_{2}\right)=\left(1+o\left(1\right)\right)n\frac{\partial}{\partial n_{2}\left(u_{0}\right)}g_{G_{2}}\left(u_{0},F_{2}^{-1}\left[\infty\right]\right)\\
=\left(1+o\left(1\right)\right)n\frac{\partial}{\partial n_{1}\left(z_{0}\right)}g_{\bC_{\infty}\setminus K}\left(z_{0},\infty\right)\left|F'\left(u_{0}\right)\right|
\end{multline*}
where $o\left(1\right)$ here does not depend on anything. Note that
we ``used a slightly bit more the pole at $a_{2}$'', but it does
not cause problem. So we can continue the main estimate (\ref{est:main_est_for_h})
\begin{multline*}
\le\left\Vert h\right\Vert _{\partial\bD}\max\Big(n\frac{\partial}{\partial n_{1}\left(u_{0}\right)}g_{G_{1}}\left(u_{0},F_{1}^{-1}\left[\infty\right]\right),\\
\left(1+o\left(1\right)\right)n\frac{\partial}{\partial n_{2}\left(u_{0}\right)}g_{G_{2}}\left(u_{0},F_{2}^{-1}\left[\infty\right]\right)\Big)\\
\le\left\Vert h\right\Vert _{\partial\bD}\left(1+o\left(1\right)\right)n\\
\cdot\max\left(\frac{\partial}{\partial n_{1}\left(u_{0}\right)}g_{G_{1}}\left(u_{0},F_{1}^{-1}\left[\infty\right]\right),\ \frac{\partial}{\partial n_{2}\left(u_{0}\right)}g_{G_{2}}\left(u_{0},F_{2}^{-1}\left[\infty\right]\right)\Big)\right).
\end{multline*}
Summarizing these estimates, we have for $h$
\begin{multline*}
\left|h'\left(1\right)\right|\le\left\Vert h\right\Vert _{\partial\bD}\left(1+o\left(1\right)\right)n\\
\cdot\max\left(\frac{\partial}{\partial n_{1}\left(u_{0}\right)}g_{G_{1}}\left(u_{0},F_{1}^{-1}\left[\infty\right]\right),\ \frac{\partial}{\partial n_{2}\left(u_{0}\right)}g_{G_{2}}\left(u_{0},F_{2}^{-1}\left[\infty\right]\right)\Big)\right).
\end{multline*}
Now we rewrite this inequality for $Q\cdot f$ using (\ref{eq:h_qf_prime_at_one})
and (\ref{est:qf_h_sup_norm}), so
\begin{multline*}
\left|\left(Q\cdot f\right)'\left(u_{1}\right)\right|\le\left\Vert Q\cdot f\right\Vert _{\partial G_{2}}\left(1+o\left(1\right)\right)n\\
\cdot\max\left(\frac{\partial}{\partial n_{1}\left(u_{0}\right)}g_{G_{1}}\left(u_{0},F_{1}^{-1}\left[\infty\right]\right),\ \frac{\partial}{\partial n_{2}\left(u_{0}\right)}g_{G_{2}}\left(u_{0},F_{2}^{-1}\left[\infty\right]\right)\Big)\right)\\
+o\left(1\right)n\left\Vert f\right\Vert _{\partial G_{2}}\cdot\max\left(\frac{\partial}{\partial n_{1}\left(u_{0}\right)}g_{G_{1}}\left(u_{0},F_{1}^{-1}\left[\infty\right]\right),\ \frac{\partial}{\partial n_{2}\left(u_{0}\right)}g_{G_{2}}\left(u_{0},F_{2}^{-1}\left[\infty\right]\right)\Big)\right).
\end{multline*}
Now, we use the estimate $\left\Vert Q\cdot f\right\Vert _{\partial G_{2}}\le\left\Vert f\right\Vert _{\partial G_{2}}$
and (\ref{est:one_minus_q_f_prime}), so 
\begin{multline}
\left|f'\left(u_{1}\right)\right|\le\left\Vert f\right\Vert _{\partial G_{2}}\left(1+o\left(1\right)\right)n\\
\cdot\max\left(\frac{\partial}{\partial n_{1}\left(u_{0}\right)}g_{G_{1}}\left(u_{0},F_{1}^{-1}\left[\infty\right]\right),\ \frac{\partial}{\partial n_{2}\left(u_{0}\right)}g_{G_{2}}\left(u_{0},F_{2}^{-1}\left[\infty\right]\right)\Big)\right).\label{ineq:further_refs}
\end{multline}
In the final step, we use $f=P_{n}\circ F$ and Proposition \ref{prop:Green_trf1},
so we get the main theorem.

\section{Sharpness}

In this section we show that the result is asymptotically sharp, that
is, we prove Theorem \ref{thm:sharpness}. The idea is similar to
that of \cite{MR3019778}. Note that we assume $C^{2}$ smoothness
only.
\begin{proof}
We may assume that 
\[
\frac{\partial}{\partial n_{1}\left(z_{0}\right)}g_{\bC_{\infty}\setminus K}\left(z_{0},\infty\right)\le\frac{\partial}{\partial n_{2}\left(z_{0}\right)}g_{\bC_{\infty}\setminus K}\left(z_{0},\infty\right).
\]
Furthermore, we assume that $n_{1}\left(.\right)$ and $n_{2}\left(.\right)$
are defined on the component of $K$ containing $z_{0}$ and they are continuous there
except for the endpoints. 

It is easy to see that for every $\varepsilon>0$ there exists a compact
set $K^{*}=K^{*}\left(\varepsilon\right)$ such that $\partial K^{*}$
is finite union of disjoint, $C^{2}$ smooth Jordan curves, $K\subset K^{*}$,
$z_{0}\in\partial K^{*}$ and the normal vector $n\left(K^{*},z_{0}\right)$
to $K^{*}$ (pointing outward) at $z_{0}$ is equal to $n_{2}\left(z_{0}\right)$
and
\begin{multline*}
\frac{\partial}{\partial n_{2}\left(z_{0}\right)}g_{\bC_{\infty}\setminus K}\left(z_{0},\infty\right)\left(1-\varepsilon\right)\le\frac{\partial}{\partial n\left(K^{*},z_{0}\right)}g_{\bC_{\infty}\setminus K^{*}}\left(z_{0},\infty\right)\\
\le\frac{\partial}{\partial n_{2}\left(z_{0}\right)}g_{\bC_{\infty}\setminus K}\left(z_{0},\infty\right).
\end{multline*}
These conditions, roughly speaking, require that near $z_{0}$, $K^{*}$
is on the $n_{1}\left(z_{0}\right)$-side of $K$ and the whole $K^{*}$
shrinks to $K$ as $\varepsilon\rightarrow0$. 
Figure \ref{fig:kstar}
depicts $K$ and the grey area is $K^{*}$.

\begin{figure}
\begin{centering}
\includegraphics[width=1\textwidth]{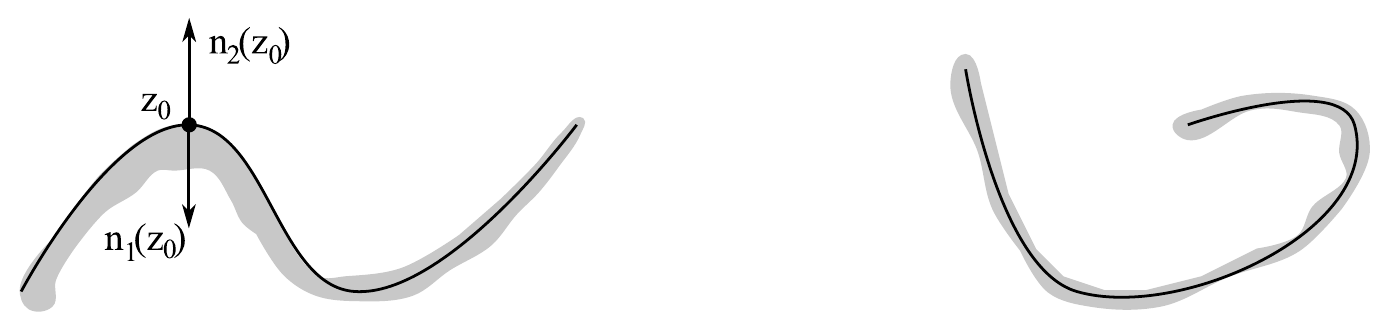}
\par\end{centering}

\protect\caption{The sets $K$ and $K^{*}$}

\label{fig:kstar}
\end{figure}

Now we apply the sharpness result of \cite{MR2177185} (Theorem 1.4,
p. 194). This gives a sequence of polynomials for $K^{*}\left(\varepsilon\right)$,
say $P_{\varepsilon,n}$, with $\deg P_{\varepsilon,n}\le n$ such
that 
\begin{multline*}
\left|P_{\varepsilon,n}'\left(z_{0}\right)\right|\ge n\left(1-o_{\varepsilon}\left(1\right)\right)\left\Vert P_{\varepsilon,n}\right\Vert _{K^{*}\left(\varepsilon\right)}\frac{\partial}{\partial n\left(K^{*},z_{0}\right)}g_{\bC_{\infty}\setminus K^{*}}\left(z_{0},\infty\right)\\
\ge n\left(1-o_{\varepsilon}\left(1\right)\right)\left(1-\varepsilon\right)\left\Vert P_{\varepsilon,n}\right\Vert _{K}\frac{\partial}{\partial n_{2}\left(z_{0}\right)}g_{\bC_{\infty}\setminus K}\left(z_{0},\infty\right)
\end{multline*}
where $o_{\varepsilon}\left(1\right)$ depends on $K^{*}\left(\varepsilon\right)$
and $z_{0}$ and tends to $0$ as $\deg P_{\varepsilon,n}\rightarrow\infty$.
Since $\varepsilon$ was arbitrary, we see that $\left(1-o_{\varepsilon}\left(1\right)\right)\left(1-\varepsilon\right)=1-o\left(1\right)$,
that is, choosing a suitable subsequence of $\left\{ P_{\varepsilon,n}\right\} $
we obtain the assertion.
\end{proof}

\section*{Acknowledgement}

The first author was supported by the European Research Council Advanced
grant No. 267055, while he had a postdoctoral position at the Bolyai
Institute, University of Szeged, Aradi v. tere 1, Szeged 6720, Hungary.

The second author, Béla Nagy was supported by Magyary scholarship:
This research was realized in the frames of TÁMOP 4.2.4. A/2-11-1-2012-0001
,,National Excellence Program - Elaborating and operating an inland
student and researcher personal support system.'' The project was
subsidized by the European Union and co-financed by the European Social
Fund. 

The authors are deeply indebted to Professor Vilmos Totik for several
helpful suggestions and comments which enabled us to improve the presentation
of our result and for drawing our attention to Widom's open-up result.

\section*{References}


\providecommand{\bysame}{\leavevmode\hbox to3em{\hrulefill}\thinspace}
\providecommand{\MR}{\relax\ifhmode\unskip\space\fi MR }
\providecommand{\MRhref}[2]{%
  \href{http://www.ams.org/mathscinet-getitem?mr=#1}{#2}
}
\providecommand{\href}[2]{#2}

\bigskip

Sergei Kalmykov
\\
Institute of Applied Mathematics, FEBRAS, 7 Radio Street, Vladivostok,
690041, Russia,
\\
Far Eastern Federal University, 8 Sukhanova Street, Vladivostok, 690950,
Russia and
\\
Bolyai Institute, University of Szeged, Szeged, Aradi v. tere 1, 6720,
Hungary
\\
email address: \href{mailto:sergeykalmykov@inbox.ru}{sergeykalmykov@inbox.ru}

\medskip{}

Béla Nagy
\\
MTA-SZTE Analysis and Stochastics Research Group, Bolyai Institute,
University of Szeged, Szeged, Aradi v. tere 1, 6720, Hungary
\\
email address: \href{mailto:nbela@math.u-szeged.hu}{nbela@math.u-szeged.hu}

\end{document}